\newtheorem{thm}{Theorem}
\newtheorem*{theorem A}{Theorem A}
\newtheorem{cor}[thm]{Corollary}
\newtheorem{lem}{Lemma}
\theoremstyle{definition}
\theoremstyle{remark}
\newtheorem{rem}{Remark}
\numberwithin{equation}{section}
\newtheorem*{thm A}{Theorem A}
\begin{document}
\thispagestyle{empty}\pagestyle{myheadings}%
\markboth{\small $(p,q)-$ Meyer-K\"{o}nig-Zeller
Durrmeyer operators }{\small  Sharma H, Gupta C, Maurya R}
\begin{center}
\textbf{\Large Approximation properties of $(p,q)-$Meyer-K\"{o}nig-Zeller
Durrmeyer operators} \vskip0.1in
\textbf{Honey SharmaA$^{1}$, Cheena Gupta$^{2}$, Ramapati Maurya$^{3}$ }\\[0pt]
$^{1}${pro.sharma.h@gmail.com},\\
{Department of Mathematics, Gulzar Group of Institutes,\\[0pt]
Ludhiana, (Punjab) India}\\[0pt]
%\textbf{Cheena GUPTA}\\
$^{2}${guptacheena21@gmail.com}\\
{I K G Punjab Technical University,\\
Kapurthala, (Punjab) India}\\
%\textbf{Ramapati MAURYA}\\
$^{3}${ramapatimaurya@gmail.com}\\
{I K G Punjab Technical University,\\
	Kapurthala, (Punjab) India}\\
{Department of Mathematics, Manav Rachna University, \\
Faridabad, (Haryana) India}

\end{center}
\section{Abstract}
In this paper, we introduce Durrmeyer type modification of Meyer-K\"{o}nig-Zeller operators based on $(p,q)-$integers. Rate of convergence of these operators are explored with the help of Korovkin type theorems. We establish some direct results for proposed operators. We also obtain statistical approximation properties of operators. In last section, we show rate of convergence of $(p,q)-$Meyer-K\"{o}nig-Zeller Durrmeyer operators for some functions by means of Matlab programming. \\[2pt]
\textbf{Keywords:} $q-$Calculus, $(p, q)-$Calculus, $(p, q)-$Meyer-K\"{o}nig-Zeller operator, Durrmeyer type operators, Modulus of continuity, Peetre $K-$functional, Statistical convergence.\\[2pt]
\textbf{Mathematical subject classification:} 41A25, 41A35.\\
\section{Introduction and Preliminaries}
Recently, Mursaleen et al. \cite{MM1}  introduced $(p, q)-$ analogue of Bernstein type operators. In the sequence, many researchers gave the $(p, q)-$analogue of various well known positive linear operators and study their approximation properties, for details one may refer to \cite{TA, VGA, KAD, HS2, HS1}. Now, We begin by recalling certain notations of $(p,q)$ calculus.\\ Let $0 < q < p \leq 1 $.
 The  $(p,q)$- integer is defined as
\begin{eqnarray*}
[n]_{p,q} = \frac{p^n-q^n}{p-q},  \quad n = 1,2.....
\end{eqnarray*}
 and the $(p,q)$-factorial is given by
\begin{align*}
[n]_{p,q}! &=
  \begin{cases}
   [1]_{p,q} [2]_{p,q}......... [n]_{p,q},        & \text n \geq 1 \\
   1,                                    & \text{n=0}.
  \end{cases}
\end{align*}

For integers  $0\leq k\leq n$, the $(p,q)$-binomial coefficient is defined as
\begin{equation*}
\left [\begin{array}{c}
                    n \\
                    k
                  \end{array}
\right]_{p,q} = \frac{[n]_{p,q}!}{[k]_{p,q}![n-k]_{p,q}!}.
\end{equation*}

Further, $(p,q)-$binomial function is expressed as
\begin{equation*}
(x+y)_{p,q}^n = \prod_{j=0}^{n-1}(p^{j} x + q^{j}y).
\end{equation*}

Recently, Sharma \cite{HS2} introduces the $(p,q)-$Beta function for $s,t \in \Re^+$ as
\begin{equation*}
\beta_{p,q}(t,s)= \int_0^1 x^{t-1} (1-qx)_{p,q}^{s-1} d_{p,q}x
\end{equation*}
and also obtain the relation between $(p, q)-$Beta function and $q-$Beta function as
\begin{equation*}
\beta_{p,q}(t,s)= p^{(s-1)(s-2)/2 -(t-1)} \beta_\frac{q}{p}(t,s),
\end{equation*}
where, $\beta_\frac{q}{p}(t,s)$ is $\frac{q}{p}-$ analogue of beta function. Using $\beta_q (t,s)= \frac{[t-1]!_q[s-1]!_q}{[s+t-1]!_q}$ and $[n]!_\frac{q}{p}=p^{-n(n-1)/2}[n]!_{p,q}$, we can write
\begin{equation}\label{eq:AA}
\beta_{p,q}(t,s)= p^{((s+t-1)(s+t-2)-(t-1)(t-2))/2-t+1}\frac{[t-1]!_{p,q}[s-1]!_{p,q}}{[s+t-1]!_{p,q}}.
\end{equation}

For $p = 1$, all the  notations of $(p,q)-$ calculus are reduced to $q-$calculus and further details on $(p,q)-$ calculus can be found in \cite{RC, PNS, VSA}. .\\

 In a recent studies, Kadak et al. \cite{KD} introduced a $(p,q)-$analogue of Meyer-K\"{o}nig-Zeller operators, for $0<q<p\leq1$, on a function defined on $[0,1]$ as
\begin{equation*}
 M_{n,p,q}(f;x)= \frac{1}{p^\frac{n(n+1)}{2}} \sum_{k=0}^\infty\begin{bmatrix}{n+k}\\{k}\end{bmatrix}_{p,q}x^k p^{-kn} (1-x)_{p,q}^{n+1}f \left(\frac{p^n[k]_{p,q}}{[n+k]_{p,q}}\right), \quad x\in[0,1)
\end{equation*}
and for $x=1,$ $M_{n,p,q}(f;1)= f(1)$.

Further, the moment of the operators are given in the following Lemma.
\begin{lem} (\cite{KD})\label{lem:A}
 For all $x\in[0,1]$ and $ 0<q<p\leq1$,  we have
\begin{eqnarray*}
M_{n,p,q}(1;x) &=& 1, \\
M_{n,p,q}(t;x) &=& x,  \\
x^2 \leq M_{n,p,q}(t^2;x) &\leq& \frac{p^n}{[n+1]_{p,q}}x +x^2.
\end{eqnarray*}
\end{lem}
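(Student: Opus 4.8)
\medskip
\noindent\textbf{Outline of the proof.}
Since this statement is quoted from \cite{KD}, I only sketch the route I would take; the case $x=1$ is trivial, so assume $x\in[0,1)$. Everything rests on the $(p,q)$-binomial series
\[
\sum_{k\ge0}\begin{bmatrix} n+k \\ k\end{bmatrix}_{p,q}p^{-kn}x^{k}=\frac{p^{n(n+1)/2}}{(1-x)_{p,q}^{n+1}},
\]
the $(p,q)$-analogue of $\sum_k\binom{n+k}{k}x^{k}=(1-x)^{-(n+1)}$, which is exactly the identity that gives $M_{n,p,q}(1;x)=1$. Setting $s_{n,k}(x)=p^{-n(n+1)/2}\begin{bmatrix} n+k \\ k\end{bmatrix}_{p,q}x^{k}p^{-kn}(1-x)_{p,q}^{n+1}$, we have $s_{n,k}(x)\ge0$ and $\sum_{k\ge0}s_{n,k}(x)=1$. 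The two algebraic facts I would use throughout are the reduction rule
\[
\frac{[k]_{p,q}}{[n+k]_{p,q}}\begin{bmatrix} n+k \\ k\end{bmatrix}_{p,q}=\begin{bmatrix} n+k-1 \\ k-1\end{bmatrix}_{p,q}\qquad(k\ge1)
\]
and the splitting rule $[a+b]_{p,q}=p^{b}[a]_{p,q}+q^{a}[b]_{p,q}$.

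For $M_{n,p,q}(t;x)$ I would put $f(t)=t$, apply the reduction rule to the factor $[k]_{p,q}/[n+k]_{p,q}$ coming from the node $p^{n}[k]_{p,q}/[n+k]_{p,q}$, and shift the index $k\mapsto k+1$. The powers of $p$ released are $p^{n}$ (from the node) times $p^{-n}$ (from $p^{-kn}=p^{-n}p^{-(k-1)n}$), which cancel, while one factor $x$ is released from $x^{k}$; what remains is $x\sum_{j\ge0}s_{n,j}(x)=x$.

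For the second moment, the lower bound is immediate: $f\mapsto M_{n,p,q}(f;x)$ is a positive linear functional with $M_{n,p,q}(1;x)=1$, so the Cauchy--Schwarz inequality gives $x^{2}=\big(M_{n,p,q}(t;x)\big)^{2}\le M_{n,p,q}(1;x)\,M_{n,p,q}(t^{2};x)$. For the upper bound I would take $f(t)=t^{2}$ and run the same reduction-and-shift step on one of the two factors $[k]_{p,q}/[n+k]_{p,q}$, which leads to
\[
M_{n,p,q}(t^{2};x)=p^{n}x\sum_{j\ge0}\frac{[j+1]_{p,q}}{[n+j+1]_{p,q}}\,s_{n,j}(x).
\]
I would then record the identity $[j+1]_{p,q}[n+1]_{p,q}-[n+j+1]_{p,q}=pq\,[n]_{p,q}[j]_{p,q}$, a one-line computation from $[m]_{p,q}=(p^{m}-q^{m})/(p-q)$, which rewrites the ratio as $[j+1]_{p,q}/[n+j+1]_{p,q}=1/[n+1]_{p,q}+\big(pq[n]_{p,q}/[n+1]_{p,q}\big)[j]_{p,q}/[n+j+1]_{p,q}$. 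Summation of the first term produces exactly $p^{n}x/[n+1]_{p,q}$, so it remains to show that the second term contributes at most $x^{2}$. For that I would apply the reduction rule once more to $[j]_{p,q}/[n+j+1]_{p,q}$, shifting the sum down to $s_{n,j-1}$ and picking up a factor $[n+j]_{p,q}/[n+j+1]_{p,q}$, and then use $[n+j]_{p,q}/[n+j+1]_{p,q}<1/p$ together with $q[n]_{p,q}\le[n+1]_{p,q}$ (both consequences of the splitting rule, via $[n+j+1]_{p,q}=p[n+j]_{p,q}+q^{n+j}$ and $[n+1]_{p,q}=p^{n}+q[n]_{p,q}$); with $\sum_{j}s_{n,j-1}(x)=1$ this closes the estimate and yields $M_{n,p,q}(t^{2};x)\le p^{n}x/[n+1]_{p,q}+x^{2}$.

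The expected obstacle is purely computational rather than conceptual: carefully tracking the powers of $p$ and $q$ released by each index shift, verifying the auxiliary identity $[j+1]_{p,q}[n+1]_{p,q}-[n+j+1]_{p,q}=pq[n]_{p,q}[j]_{p,q}$, and noticing that because $m\mapsto[m]_{p,q}$ need not be increasing when $p<1$, the crude estimate $[n+j]_{p,q}/[n+j+1]_{p,q}\le1$ is unavailable and must be replaced by $[n+j]_{p,q}/[n+j+1]_{p,q}<1/p$.
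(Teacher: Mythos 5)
Your proposal is correct, but note that the paper offers no proof of this lemma at all: it is quoted verbatim from Kadak, Khan and Mursaleen \cite{KD}, so there is no in-paper argument to compare against. Your sketch is a sound self-contained verification. The normalization identity $\sum_{k}\left[\begin{smallmatrix}n+k\\ k\end{smallmatrix}\right]_{p,q}p^{-kn}x^{k}=p^{n(n+1)/2}/(1-x)_{p,q}^{n+1}$ is the correct $(p,q)$-Euler series (it checks out, e.g., for $n=0,1$), the reduction rule $\frac{[k]_{p,q}}{[n+k]_{p,q}}\left[\begin{smallmatrix}n+k\\ k\end{smallmatrix}\right]_{p,q}=\left[\begin{smallmatrix}n+k-1\\ k-1\end{smallmatrix}\right]_{p,q}$ together with the index shift does give $M_{n,p,q}(t;x)=x$ with the powers of $p$ cancelling exactly as you say, the auxiliary identity $[j+1]_{p,q}[n+1]_{p,q}-[n+j+1]_{p,q}=pq\,[n]_{p,q}[j]_{p,q}$ is verified by direct expansion, and your final estimate $\frac{pq[n]_{p,q}}{[n+1]_{p,q}}\cdot\frac{[n+j]_{p,q}}{[n+j+1]_{p,q}}\le 1$, obtained from $[n+j+1]_{p,q}=p[n+j]_{p,q}+q^{n+j}$ and $[n+1]_{p,q}=p^{n}+q[n]_{p,q}$, closes the upper bound $M_{n,p,q}(t^{2};x)\le \frac{p^{n}}{[n+1]_{p,q}}x+x^{2}$; the Cauchy--Schwarz argument for the lower bound is also fine since the operator is a positive linear functional with $M_{n,p,q}(1;x)=1$. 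Your caution that $m\mapsto[m]_{p,q}$ need not be monotone for $p<1$, forcing the replacement of $[n+j]_{p,q}/[n+j+1]_{p,q}\le 1$ by $<1/p$, is well taken. It is worth remarking that your reduction-and-shift technique is essentially the same device this paper itself uses later (in Theorem~\ref{thm:1}, via Lemmas~\ref{lem:2} and~\ref{lem:3}) to estimate the moments of the Durrmeyer variant, so your sketch fits naturally with the methods of the paper even though the lemma itself is only cited.
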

In the past two decades, Studies of Durrmeyer variants of various operators remained the centre  of attraction for the researchers, for which one may refer to \cite{AAR, V1, V2, HS3, HS4}. Motivated by these studies, now we introduce the Meyer-K\"{o}nig-Zeller Durrmeyer operators based on $(p,q)-$integers in the following section.\\
\section{Construction of operator and Moment estimate}

For $0<q<p\leq1$ and function $f$ defined on $[0,1]$, the $(p,q)-$ Meyer-K\"{o}nig-Zeller Durrmeyer operators are defined as follows:
\begin{equation*}
\widetilde{M}_{n,k}^{(p,q)}(f;x)= \frac{[n+1]_{p,q}}{p^n}\sum^{\infty}_{k=0}m_{n,k}^{(p,q)}(x) (pq)^-k\int_0^1
b_{n,k}^{(p,q)}(qt) f(t) d_{p,q}t, \text{        } 0\leq x<1,
\end{equation*}
here,
\begin{equation*}
m_{n,k}^{(p,q)}(x)= \frac{1}{p^{kn+n(n+1)/2}}\begin{bmatrix}{n+k}\\{k}\end{bmatrix}_{p,q}x^k (1-x)_{p,q}^{n+1},
\end{equation*}

\begin{equation*}
b_{n,k}^{(p,q)}(qt)= \frac{1}{p^{k(n-1)+n(n-1)/2}}\begin{bmatrix}{n+k+1}\\{k}\end{bmatrix}_{p,q}(qt)^k (1-qt)_{p,q}^n
\end{equation*}
and $\widetilde{M}_{n,k}^{(p,q)}(f;1)=1$. Before computing the moments of $(p,q)-$Meyer-K\"{o}nig-Zeller Durrmeyer operators, we prove some lemmas as follows:
\begin{lem}\label{lem:1}
 Let $ 0<q<p\leq1$ and $s=0,1,2...$, we have
\begin{equation*}
\int_0^1b_{n,k}^{(p,q)}(qt) t^s d_{p,q}t = \frac{[n+k+1]_{p,q}![k+s]_{p,q}!}{[k]_{p,q}![n+k+s+1]_{p,q}!}  \frac{(pq)^k}{[n+1]_{p,q}} p^{n(s+1)}
\end{equation*}
\end{lem}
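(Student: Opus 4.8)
The plan is to substitute the explicit form of $b_{n,k}^{(p,q)}(qt)$ into the integral, factor out everything independent of $t$, and recognise the leftover integral as a value of the $(p,q)$-Beta function $\beta_{p,q}$ introduced before \eqref{eq:AA}. Concretely, inserting the definition of $b_{n,k}^{(p,q)}$ gives
\[
\int_0^1 b_{n,k}^{(p,q)}(qt)\,t^s\,d_{p,q}t
 = \frac{q^k}{p^{k(n-1)+n(n-1)/2}}\begin{bmatrix}n+k+1\\k\end{bmatrix}_{p,q}\int_0^1 t^{k+s}(1-qt)_{p,q}^{n}\,d_{p,q}t ,
\]
and the remaining integral is precisely $\beta_{p,q}(k+s+1,\,n+1)$, since $t^{k+s}=t^{(k+s+1)-1}$ and $(1-qt)_{p,q}^n=(1-qt)_{p,q}^{(n+1)-1}$.

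Next I would apply formula \eqref{eq:AA} with its two parameters specialised to $k+s+1$ and $n+1$. This expresses $\beta_{p,q}(k+s+1,n+1)$ as a power of $p$ times the factorial quotient $[k+s]_{p,q}!\,[n]_{p,q}!/[n+k+s+1]_{p,q}!$. The $p$-exponent coming out of \eqref{eq:AA} is a quadratic expression in $k+s$ and $n$; a short algebraic simplification shows it collapses to $(k+s)n+n(n+1)/2$.

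It then remains to assemble the pieces. I would write the binomial coefficient as $\begin{bmatrix}n+k+1\\k\end{bmatrix}_{p,q}=[n+k+1]_{p,q}!\,/\,\bigl([k]_{p,q}!\,[n+1]_{p,q}!\bigr)$ and combine the factor $[n]_{p,q}!$ produced by the Beta evaluation with $1/[n+1]_{p,q}!$ using $[n]_{p,q}!/[n+1]_{p,q}! = 1/[n+1]_{p,q}$; this yields exactly the factorial part of the claimed identity. Separately I would merge the three competing $p$-powers — the normalising factor $p^{-k(n-1)-n(n-1)/2}$ from $b_{n,k}^{(p,q)}$, the $q^k$ coming from $(qt)^k$, and the factor $p^{(k+s)n+n(n+1)/2}$ from \eqref{eq:AA} — and verify that the $p$-exponent telescopes to $k+n(s+1)$ while the $q$-power stays $q^k$, so that together they give $(pq)^k\,p^{n(s+1)}$. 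Matching the factorial part and the $(pq)^k p^{n(s+1)}/[n+1]_{p,q}$ part against the right-hand side completes the proof.

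The only real obstacle is bookkeeping: one must keep the two distinct roles of $s$ apart (the exponent $t^s$ in the lemma versus the second slot of $\beta_{p,q}$ in \eqref{eq:AA}) and carefully track the several powers of $p$ so that they cancel correctly. No idea is needed beyond the Beta-integral identity \eqref{eq:AA}, which is already available from the preliminaries.
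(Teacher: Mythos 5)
Your proposal is correct and follows exactly the route the paper indicates: substitute the definition of $b_{n,k}^{(p,q)}$, recognise the integral as $\beta_{p,q}(k+s+1,n+1)$, and apply Equation \eqref{eq:AA}; your exponent bookkeeping (the Beta exponent collapsing to $(k+s)n+n(n+1)/2$ and the total $p$-power telescoping to $k+n(s+1)$) checks out. The paper gives only a one-line proof citing the $(p,q)$-Beta definition and \eqref{eq:AA}, so your write-up is simply a fully detailed version of the same argument.
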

\begin{proof}
Lemma can be proved directly by using definition of $(p, q)-$beta operator and Equation (\ref{eq:AA}).
\end{proof}
\begin{lem}\label{lem:2}
For $r=1,2\ldots$ and $n>r$, we have
\begin{equation*}
\sum\limits_{k=0}^{\infty}\left[\begin {array}{c}
        n+k \\
        k
      \end{array}
\right]_{p,q}\frac{x^{k}(1-x)_{p,q}^{n+1}}{[n+k]_{p,q}^{\underline{r}}}p^{(r-n)k} =
\frac{\prod\limits_{j=0}^{r-1}(p^{n-j}-q^{n-j}x)}{[n]_{p,q}^{\underline{r}}} p^{\frac{(n-r)(n-r+1)}{2}}
\end{equation*}
where $[n]_{p,q}^{\underline{r}}=[n]_{p,q}[n-1]_{p,q}[n-2]_{p,q}\ldots[n-r+1]_{p,q}$.
\end{lem}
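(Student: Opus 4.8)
The plan is to collapse the left-hand side into the partition-of-unity identity $M_{m,p,q}(1;x)=1$ of Lemma~\ref{lem:A}, applied with $m=n-r$. The first step is purely algebraic. Writing $\left[\begin{array}{c}n+k\\k\end{array}\right]_{p,q}=\dfrac{[n+k]_{p,q}!}{[k]_{p,q}!\,[n]_{p,q}!}$ and using $[n+k]_{p,q}^{\underline{r}}=\dfrac{[n+k]_{p,q}!}{[n+k-r]_{p,q}!}$ together with $[n]_{p,q}!=[n]_{p,q}^{\underline{r}}\,[n-r]_{p,q}!$ (both legitimate because $n>r$ forces $n-r\ge 1$), one obtains
\[
\left[\begin{array}{c}n+k\\k\end{array}\right]_{p,q}\frac{1}{[n+k]_{p,q}^{\underline{r}}}
=\frac{1}{[n]_{p,q}^{\underline{r}}}\left[\begin{array}{c}n-r+k\\k\end{array}\right]_{p,q}.
\]

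Substituting this into the series and noting $p^{(r-n)k}=p^{-(n-r)k}$, the sum on the left becomes $\dfrac{(1-x)_{p,q}^{n+1}}{[n]_{p,q}^{\underline{r}}}\sum_{k=0}^{\infty}\left[\begin{array}{c}(n-r)+k\\k\end{array}\right]_{p,q}x^{k}p^{-(n-r)k}$. Next I would invoke $M_{n-r,p,q}(1;x)=1$ from Lemma~\ref{lem:A} (valid since $n-r\ge1$) and solve for the bare sum, which gives
\[
\sum_{k=0}^{\infty}\left[\begin{array}{c}(n-r)+k\\k\end{array}\right]_{p,q}x^{k}p^{-(n-r)k}=\frac{p^{(n-r)(n-r+1)/2}}{(1-x)_{p,q}^{n-r+1}}.
\]
Finally, from the definition $(1-x)_{p,q}^{m}=\prod_{j=0}^{m-1}(p^{j}-q^{j}x)$ one has
\[
\frac{(1-x)_{p,q}^{n+1}}{(1-x)_{p,q}^{n-r+1}}=\prod_{j=n-r+1}^{n}(p^{j}-q^{j}x)=\prod_{j=0}^{r-1}(p^{n-j}-q^{n-j}x),
\]
and collecting the scalar factor $p^{(n-r)(n-r+1)/2}/[n]_{p,q}^{\underline{r}}$ reproduces exactly the claimed right-hand side. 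Throughout, $x$ is taken in $[0,1)$, where the series converges and may be rearranged just as in the definition of the $(p,q)$-MKZ operators.

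The step I expect to carry the real content is the factorial collapse in the first display: once the $(p,q)$-binomial coefficient divided by the falling factorial power $[n+k]_{p,q}^{\underline{r}}$ is recognized as $1/[n]_{p,q}^{\underline{r}}$ times a \emph{shifted} binomial coefficient of order $n-r$, the entire sum reduces to the statement that the MKZ basis functions of order $n-r$ add to one, and nothing deeper remains. An essentially equivalent alternative is induction on $r$ (base case $r=1$ being the collapse plus $M_{n-1,p,q}(1;x)=1$, and the inductive step repeating the same manipulation), but the direct computation above is cleaner since it avoids propagating the inductive hypothesis through the $(p,q)$-factorial bookkeeping.
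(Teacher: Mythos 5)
Your argument is correct, and since the paper states Lemma \ref{lem:2} without any proof there is nothing to compare against; your route — absorbing $[n+k]_{p,q}^{\underline{r}}$ into the $(p,q)$-binomial coefficient to get $\frac{1}{[n]_{p,q}^{\underline{r}}}$ times the order-$(n-r)$ coefficient, then invoking the partition-of-unity identity $M_{n-r,p,q}(1;x)=1$ of Lemma \ref{lem:A} and cancelling $(1-x)_{p,q}^{n+1}/(1-x)_{p,q}^{n-r+1}$ — is the natural (and evidently intended) one, matching how the lemma is used in Theorem \ref{thm:1}. Your explicit restriction to $x\in[0,1)$ is also a genuine improvement on the statement as printed: at $x=1$ the left-hand side vanishes termwise while the right-hand side does not, so the identity only holds on $[0,1)$, which suffices since the operator is defined separately at $x=1$.
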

\begin{lem}\label{lem:3}
The identity
 \begin{equation*}
 \frac{1}{[n+k+r]_{p,q}} \leq \frac{1}{q^r[n+k]_{p,q}}, r\geq0
 \end{equation*}
 holds.
 \end{lem}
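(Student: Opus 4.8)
The plan is to reduce the claim to the elementary fact that $p^r \ge q^r$ for $r\ge 0$ when $0<q<p\le 1$. Under the standing hypotheses (and since in our setting $n\ge 1$, so that $n+k$ and $n+k+r$ are at least $1$), every $(p,q)$-integer appearing is strictly positive, so passing to reciprocals is legitimate and the stated bound is equivalent to
\begin{equation*}
[n+k+r]_{p,q} \ \ge\ q^r\,[n+k]_{p,q}.
\end{equation*}

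First I would record the one-step recurrence $[m+1]_{p,q} = p^m + q\,[m]_{p,q}$, which follows at once from $[m]_{p,q}=(p^m-q^m)/(p-q)$ by clearing the denominator. In particular $[m+1]_{p,q}\ge q\,[m]_{p,q}$ for every $m\ge 0$, since $p^m\ge 0$. Iterating this inequality $r$ times — a one-line induction on $r$, with the base case $r=0$ an equality — gives $[m+r]_{p,q}\ge q^r\,[m]_{p,q}$. Specializing $m=n+k$ and taking reciprocals of both (positive) sides yields exactly the asserted inequality. Equivalently, one can argue directly: after multiplying through by $p-q>0$, the desired bound $[n+k+r]_{p,q}\ge q^r[n+k]_{p,q}$ reads $p^{\,n+k+r}-q^{\,n+k+r}\ge q^r p^{\,n+k}-q^{\,n+k+r}$, i.e.\ $p^{\,n+k+r}\ge q^r p^{\,n+k}$, i.e.\ $p^r\ge q^r$, which holds because $p\ge q>0$.

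There is essentially no real obstacle here; the only point meriting a word of care is the positivity of the denominators, so that inverting the inequality preserves its direction, and this is guaranteed by $0<q<p\le 1$ together with $n\ge 1$ (the case $r=0$ being trivial).
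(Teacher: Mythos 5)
Your proof is correct, and it is worth noting that the paper itself states Lemma \ref{lem:3} without any proof at all, so there is nothing to diverge from: your argument is exactly the elementary verification the authors implicitly rely on. The direct route (clearing the common denominator $p-q>0$ and reducing to $p^{r}\geq q^{r}$, with positivity of $[n+k]_{p,q}$ guaranteed by $n\geq 1$ justifying the passage to reciprocals) is the cleanest, and as you observe it even covers non-integer $r\geq 0$, whereas the iterated recurrence $[m+1]_{p,q}=p^{m}+q[m]_{p,q}$ needs $r$ to be a nonnegative integer; either version fully settles the statement.
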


\begin{thm}\label{thm:1}
For all $x\in[0,1]$, $n\in\mathbb{N}$ and $0<q<p\leq1$, we have
\begin{eqnarray*}
\widetilde{M}_{n,k}^{(p,q)}\left(e_{0};x\right)&=&1,\\
\frac{x}{q^2}\left(1-\frac{q+1}{[n]_{p,q}}\right) \leq \widetilde{M}_{n,k}^{(p,q)}\left(e_{1};x\right)&\leq& \frac{x}{q} +\frac{(p^n-q^nx)}{q^2[n]_{p,q}},\\ \widetilde{M}_{n,k}^{(p,q)}\left(e_{2};x\right)&\leq&\frac{x^{2}}{q^{2}}+\frac{(p+q)^{2}}{q^{5}}\frac{(p^n-q^nx)}{[n]_{p,q}}x+\frac{p(p+q)}{q^{6}}\frac{(p^n-q^nx)(p^{n-1}-q^{n-1}x)}{[n]_{p,q}[n-1]_{p,q}},
\end{eqnarray*}
here, $e_i=t^i$ for $i=0,1,2$.
\end{thm}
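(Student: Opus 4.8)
The plan is to compute $\widetilde{M}_{n,k}^{(p,q)}(e_s;x)$ for $s=0,1,2$ by first evaluating the inner $(p,q)$-integral with Lemma~\ref{lem:1}, then summing the resulting series over $k$ with Lemma~\ref{lem:2}, and finally controlling the leftover $(p,q)$-factorial ratios with Lemma~\ref{lem:3}. Explicitly, fixing $s$ and inserting the formula of Lemma~\ref{lem:1} into the definition of $\widetilde{M}_{n,k}^{(p,q)}$, the factor $(pq)^{-k}$ cancels against the $(pq)^k$ produced by the integral, the prefactor $[n+1]_{p,q}/p^n$ cancels the $1/[n+1]_{p,q}$ and part of the $p^{n(s+1)}$, and one is left with a series of the form
\begin{equation*}
\widetilde{M}_{n,k}^{(p,q)}(e_s;x)=p^{ns}\sum_{k=0}^{\infty} m_{n,k}^{(p,q)}(x)\,\frac{[k+s]_{p,q}!\,[n+k+1]_{p,q}!}{[k]_{p,q}!\,[n+k+s+1]_{p,q}!}.
\end{equation*}
The ratio $[n+k+1]_{p,q}!/[n+k+s+1]_{p,q}!$ equals $1/\big([n+k+s+1]_{p,q}[n+k+s]_{p,q}\cdots[n+k+2]_{p,q}\big)$, a product of $s$ consecutive $(p,q)$-integers, and $[k+s]_{p,q}!/[k]_{p,q}!$ is the product $[k+s]_{p,q}\cdots[k+1]_{p,q}$.

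For $s=0$ the two extra products are empty, the summand is exactly $m_{n,k}^{(p,q)}(x)$, and the identity $\sum_k m_{n,k}^{(p,q)}(x)=1$ (which is the $s=0$, or equivalently $r=0$-type, normalization underlying Lemma~\ref{lem:A}) gives $\widetilde{M}_{n,k}^{(p,q)}(e_0;x)=1$. For $s=1$ and $s=2$ the strategy is to bound the denominator product $[n+k+s+1]_{p,q}\cdots[n+k+2]_{p,q}$ from below using Lemma~\ref{lem:3}, which replaces each $[n+k+r]_{p,q}$ by $q^r[n+k]_{p,q}$ and thus pulls out a power of $q$ together with $[n+k]_{p,q}^{-s}$; one similarly estimates the numerator product $[k+s]_{p,q}\cdots[k+1]_{p,q}\le [n+k]_{p,q}^s$ for the upper bounds (and keeps it exactly, comparing $[k+j]_{p,q}$ with $[n+k]_{p,q}$, for the lower bound in the $s=1$ case). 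After these substitutions the series becomes a combination of sums $\sum_k m_{n,k}^{(p,q)}(x)\,[n+k]_{p,q}^{-r}p^{(r-n)k}x^{-k}\cdot(\text{stuff})$ — more precisely, writing $m_{n,k}^{(p,q)}(x)$ in terms of the binomial coefficient times $x^k(1-x)_{p,q}^{n+1}$ and the power of $p$, these are exactly the sums evaluated in closed form by Lemma~\ref{lem:2} with $r=1$ (for $s=1$) and $r=1,2$ (for $s=2$). Substituting the closed forms $\prod_{j=0}^{r-1}(p^{n-j}-q^{n-j}x)/[n]_{p,q}^{\underline r}$ and simplifying the accumulated powers of $p$ and $q$ yields the three displayed estimates, with the $(p+q)$ factors in the $e_2$ bound arising from estimates of the form $[k+2]_{p,q}[k+1]_{p,q}\le (p+q)^2[n+k]_{p,q}^2$ type inequalities (using $[m]_{p,q}\le (p+q)[m-1]_{p,q}$-style bounds) and from the $p^{ns}$ prefactors.

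The main obstacle is purely bookkeeping: tracking the numerous powers of $p$ hidden in $m_{n,k}^{(p,q)}(x)$ (the $p^{-kn-n(n+1)/2}$), in the closed form of Lemma~\ref{lem:2} (the $p^{(n-r)(n-r+1)/2}$ and the $p^{(r-n)k}$ that must be matched), and in Lemma~\ref{lem:1} (the $p^{n(s+1)}$), so that everything collapses to the clean exponents appearing in the statement; alongside this, one must choose the direction of each application of Lemma~\ref{lem:3} consistently so as to produce a genuine lower bound for $e_1$ and genuine upper bounds for $e_1$ and $e_2$. No deep idea is needed beyond the three lemmas; the care lies in not losing or gaining a stray factor of $p$ or $q$ during the simplification, and in verifying that the lower bound for $\widetilde{M}_{n,k}^{(p,q)}(e_1;x)$ is obtained by the reverse chain of inequalities (bounding the numerator product below and the denominator product above by powers of $[n+k]_{p,q}$ times suitable powers of $p,q$), which is where I expect the bulk of the computational effort to go.
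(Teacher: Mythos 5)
Your skeleton (insert Lemma \ref{lem:1} into the definition, clean up the $(pq)^{\pm k}$ and $p$-powers, then control the resulting series with Lemmas \ref{lem:2} and \ref{lem:3}) is the same as the paper's, and your starting identity $\widetilde{M}_{n,k}^{(p,q)}(e_s;x)=p^{ns}\sum_k m_{n,k}^{(p,q)}(x)\,\frac{[k+s]_{p,q}!\,[n+k+1]_{p,q}!}{[k]_{p,q}!\,[n+k+s+1]_{p,q}!}$ and the $e_0$ case are correct. The genuine gap is in how you propose to handle the numerator factors $[k+1]_{p,q},[k+2]_{p,q}$. Bounding $[k+s]_{p,q}\cdots[k+1]_{p,q}\le [n+k]_{p,q}^{s}$ (or using $[m]_{p,q}\le(p+q)[m-1]_{p,q}$ all the way down) destroys exactly the $x$-dependence that the theorem asserts: for $s=1$ it gives only $\widetilde{M}_{n,k}^{(p,q)}(e_1;x)\le p^n/q^2$, a constant bound which does \emph{not} imply $\frac{x}{q}+\frac{p^n-q^nx}{q^2[n]_{p,q}}$ (take $x$ near $0$ and $[n]_{p,q}$ large), and which would be useless for the later Korovkin argument where the bounds must tend to $x$ and $x^2$. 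What the paper actually does, and what your plan omits, is the exact splitting $[k+1]_{p,q}=p^k+q[k]_{p,q}$ (and its analogue for $[k+2]_{p,q}[k+1]_{p,q}$, producing the coefficients $(p+q)p^{2k}$, $(p+2q)qp^k[k]_{p,q}$, $q^3[k]_{p,q}^2$): the $p^k$-pieces become Lemma \ref{lem:2} sums, while the $[k]_{p,q}$-pieces are converted into powers of $x$ by shifting the index in the $(p,q)$-binomial coefficient, i.e.\ by invoking the Meyer-K\"onig-Zeller moments of Lemma \ref{lem:A} ($\sum_k m_{n,k}^{(p,q)}(x)\,p^n[k]_{p,q}/[n+k]_{p,q}=x$). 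Those exact evaluations are where the terms $\frac{x}{q}$, $\frac{x^2}{q^2}$ and $\frac{(p^n-q^nx)}{q^2[n]_{p,q}}$ come from; no comparison of $[k+j]_{p,q}$ with $[n+k]_{p,q}$ can reproduce them.

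The same problem affects your lower bound for $e_1$: "reversing" the crude comparisons (e.g.\ $[k+1]_{p,q}\ge q[k]_{p,q}$ together with $[n+k+2]_{p,q}\le(p+q)^2[n+k]_{p,q}$) yields at best $\frac{q\,x}{(p+q)^2}$, which is far below the claimed $\frac{x}{q^2}\bigl(1-\frac{q+1}{[n]_{p,q}}\bigr)$ as $p,q\to1$. The paper's lower-bound argument is a different and more delicate chain: after an index shift it repeatedly uses the exact identity $q[m]_{p,q}=[m+1]_{p,q}-p^{m}$ (so that $\frac{[n+k+2]_{p,q}-p^{n+k+1}}{[n+k+3]_{p,q}}$-type quotients appear) and bounds the small correction $\frac{p^{n+k+1}}{[n+k+3]_{p,q}}\le\frac{1}{[n]_{p,q}}$, accumulating only $O(1/[n]_{p,q})$ losses; this is what produces the factor $1-\frac{q+1}{[n]_{p,q}}$. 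So the missing ideas are concrete: (i) the decomposition $[k+j]_{p,q}=p^{k+j-1}+q[k+j-1]_{p,q}$ with index shifts feeding into Lemma \ref{lem:A}, and (ii) the identity-plus-small-error scheme for the lower bound; without them your plan cannot reach the stated inequalities.
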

\begin{proof}
First moment can be directly computed. We use the moments obtained for  $(p,q)-$Meyer-K\"{o}nig-Zeller operators in Lemma \ref{lem:A} to estimate moments of proposed Durrmeyer operators. By using Lemma \ref{lem:1} for $s=1$ and Lemma \ref{lem:2}, we get lower bound of second moment as follows:
\begin{eqnarray*}
\widetilde{M}_{n,k}^{(p,q)}(e_{1},x)&=&\sum_{k=0}^\infty m_{n,k}^{(p,q)}(x) \frac{[n+k+1]_{p,q}![k+1]_{p,q}!}{[k]_{p,q}![n+k+2]_{p,q}!}p^n \\
 &=& p^n\sum_{k=0}^\infty\frac{1}{p^{kn+n(n+1)/2}}\begin{bmatrix}{n+k}\\{k}\end{bmatrix}_{p,q}x^k (1-x)_{p,q}^{n+1} \frac{[k+1]_{p,q}}{[n+k+2]_{p,q}} \\
 &=& p^n\sum_{k=1}^\infty\frac{1}{p^{kn+n(n+1)/2}}\begin{bmatrix}{n+k-1}\\{k-1}\end{bmatrix}_{p,q}x^k (1-x)_{p,q}^{n+1}\frac{[k+1]_{p,q}}{[k]_{p,q}} \frac{[n+k]_{p,q}}{[n+k+2]_{p,q}}\\
&\geq& p^n\sum_{k=0}^\infty\frac{1}{p^{(k+1)n+n(n+1)/2}}\begin{bmatrix}{n+k}\\{k}\end{bmatrix}_{p,q}x^{k+1} (1-x)_{p,q}^{n+1}\frac{[n+k+1]_{p,q}}{[n+k+3]_{p,q}}\\
&=& \frac{x}{q}\sum_{k=0}^\infty m_{n,k}^{(p,q)}(x) \left(\frac{[n+k+2]_{p,q}-p^{n+k+1}}{[n+k+3]_{p,q}}\right)\\
&\geq& \frac{x}{q}\sum_{k=0}^\infty m_{n,k}^{(p,q)}(x) \left(\frac{[n+k+2]_{p,q}}{[n+k+3]_{p,q}}-\frac{1}{[n]_{p,q}}\right)\\
&=& \frac{x}{q}\sum_{k=0}^\infty m_{n,k}^{(p,q)}(x) \frac{[n+k+2]_{p,q}}{[n+k+3]_{p,q}}-\frac{x}{[n]_{p,q}q}\\
&=& \frac{x}{q^2}\sum_{k=0}^\infty m_{n,k}^{(p,q)}(x) \left(\frac{[n+k+3]_{p,q}-p^{n+k+2}}{[n+k+3]_{p,q}}\right)-\frac{x}{[n]_{p,q}q}\\
&&\geq \frac{x}{q^2}\sum_{k=0}^\infty m_{n,k}^{(p,q)}(x) \left(1-\frac{1}{[n]_{p,q}}\right)-\frac{x}{[n]_{p,q}q}\\
&=& \frac{x}{q^2}\left(1-\frac{q+1}{[n]_{p,q}}\right).
\end{eqnarray*}
By using inequality of Lemma \ref{lem:3}, upper bound can be obtained as below:
 \begin{eqnarray*}
\widetilde{M}_{n,k}^{(p,q)}(e_{1},x) &=& p^n\sum_{k=0}^\infty m_{n,k}^{(p,q)}(x) \frac{[k+1]_{p,q}}{[n+k+2]_{p,q}} \\
 &\leq& p^n\sum_{k=0}^\infty m_{n,k}^{(p,q)}(x) \frac{p^k+q[k]_{p,q}}{q^2[n+k]_{p,q}} \\
 &=& \frac{p^n}{q^2}\sum_{k=0}^\infty m_{n,k}^{(p,q)}(x) \frac{p^k}{[n+k]_{p,q}} + \frac{1}{q}\sum_{k=0}^\infty m_{n,k}^{(p,q)}(x) \frac{p^n[k]_{p,q}}{[n+k]_{p,q}}
  \end{eqnarray*}
  \begin{eqnarray*}
&=& \frac{p^n}{q^2}\sum_{k=0}^\infty \frac{1}{p^{kn}+n(n+1)/2}\begin{bmatrix}{n+k}\\{k}\end{bmatrix}_{p,q}x^k (1-x)_{p,q}^{n+1} \frac{p^k}{[n+k]_{p,q}} + \frac{x}{q} \\
&=&  \frac{\left(p^n-q^nx\right)}{q^2[n]_{p,q}}+\frac{x}{q}. \\
\end{eqnarray*}
To estimate third moment, we use Lemma \ref{lem:1} for $s=2$ and Lemma \ref{lem:2} as follows:
\begin{eqnarray*}
\widetilde{M}_{n,k}^{(p,q)}(e_{2},x)&=&\sum_{k=0}^\infty m_{n,k}^{(p,q)}(x) \frac{[n+k+1]_{p,q}![k+2]_{p,q}!}{[k]_{p,q}![n+k+3]_{p,q}!}p^{2n} \\
&=& p^{2n}\sum_{k=0}^\infty m_{n,k}^{(p,q)}(x) \frac{[k+2]_{p,q}[k+1]_{p,q}}{[n+k+3]_{p,q}[n+k+2]_{p,q}} \\
&\leq& \frac{p^{2n}}{q^6} \sum_{k=0}^\infty m_{n,k}^{(p,q)}(x)\frac{(p+q)p^{2k}+ (p+2q)qp^k [k]_{p,q}+q^3[k]_{p,q}^2 }{[n+k]_{p,q}[n+k-1]_{p,q}}\\
&=& \frac{p^{\frac{-n(n-3)}{2}}}{q^6}(p+q)\sum_{k=0}^\infty  p^{-kn}\begin{bmatrix}{n+k}\\{k}\end{bmatrix}_{p,q} \frac{x^k (1-x)_{p,q}^{n+1}}{[n+k]_{p,q}[n+k-1]_{p,q}}p^{2k}\\
&+& \frac{p^{\frac{-n(n-3)}{2}}}{q^5}(p+2q)\sum_{k=0}^\infty  p^{-kn}\begin{bmatrix}{n+k}\\{k}\end{bmatrix}_{p,q} \frac{x^k (1-x)_{p,q}^{n+1}}{[n+k]_{p,q}[n+k-1]_{p,q}}p^{k}[k]_{p,q}\\
&+& \frac{p^{\frac{-n(n-3)}{2}}}{q^3}\sum_{k=0}^\infty  p^{-kn}\begin{bmatrix}{n+k}\\{k}\end{bmatrix}_{p,q} \frac{x^k (1-x)_{p,q}^{n+1}}{[n+k]_{p,q}[n+k-1]_{p,q}}[k]_{p,q}^2\\
&=& I_1+I_2+I_3.
\end{eqnarray*}
By using lemma \ref{lem:3}, $I_1$ can be obtained as
\begin{equation*}
I_1=\frac{p(p+q)}{q^6} \frac{\left(p^n-q^nx\right)\left(p^{n-1}-q^{n-1}x\right)}{[n]_{p,q}[n-1]_{p,q}}.
\end{equation*}
Computations for $I_2$ are as follows:
\begin{eqnarray*}
I_2&=&\frac{p^{\frac{-n(n-3)}{2}}}{q^5}(p+2q)\sum_{k=1}^\infty p^{-kn}\begin{bmatrix}{n+k-1}\\{k-1}\end{bmatrix}_{p,q} \frac{x^k (1-x)_{p,q}^{n+1}}{[n+k-1]_{p,q}}p^{k}\\
&=& \frac{p^{\frac{-n(n-1)}{2}+1}}{q^5}(p+2q)x\sum_{k=0}^\infty p^{-kn} \begin{bmatrix}{n+k}\\{k}\end{bmatrix}_{p,q} \frac{x^k(1-x)_{p,q}^{n+1}}{[n+k]_{p,q}}p^{k}\\
&=&\frac{p(p+2q)}{q^5[n]_{p,q}}\left(p^n-q^nx\right)x.
\end{eqnarray*}
$I_3$ can be obtained as follows:
\begin{eqnarray*}
I_3&=& \frac{p^{\frac{-n(n-3)}{2}}}{q^3}\sum_{k=1}^\infty p^{-kn}\begin{bmatrix}{n+k-1}\\{k-1}\end{bmatrix}_{p,q} \frac{x^k (1-x)_{p,q}^{n+1}}{[n+k-1]_{p,q}}[k]_{p,q}\\
&=&\frac{p^{\frac{-n(n-1)}{2}}}{q^3}x \sum_{k=0}^\infty p^{-kn} \begin{bmatrix}{n+k}\\{k}\end{bmatrix}_{p,q}  \frac{x^k (1-x)_{p,q}^{n+1}}{[n+k]_{p,q}}[k+1]_{p,q}\\
&=&\frac{p^{\frac{-n(n-1)}{2}}}{q^3}x \sum_{k=0}^\infty p^{-kn} \begin{bmatrix}{n+k}\\{k}\end{bmatrix}_{p,q}  \frac{x^k (1-x)_{p,q}^{n+1}}{[n+k]_{p,q}}p^k
\end{eqnarray*}
\begin{eqnarray*}
&+& \frac{p^{\frac{-n(n-1)}{2}}}{q^3}x \sum_{k=0}^\infty p^{-kn} \begin{bmatrix}{n+k}\\{k}\end{bmatrix}_{p,q}  \frac{x^k (1-x)_{p,q}^{n+1}}{[n+k]_{p,q}}q[k]_{p,q}\\
&=& \frac{x^2}{q^2} + \left(\frac{\left(p^n-q^nx\right)}{q^3[n]_{p,q}} \right)x.
\end{eqnarray*}
By using $I_1$, $I_2$ and $I_3$, we get upper bound of second moment.
 \end{proof}
 \begin{cor}\label{cor:1}
Central moments of operators are
\begin{eqnarray*}
\widetilde M_{n,k}^{(p,q)}\left(\psi_{1};x\right)&\leq& \frac{p^n-q^nx}{q^2[n]_{p,q}}+ \left(\frac{1}{q}-1\right)x,\\
\widetilde M_{n,k}^{(p,q)}\left(\psi_{2};x\right)&\leq& x^2\left(1-\frac{1}{q^2}+\frac{2(q+1)}{q^{2}[n]_{p,q}}\right)+\frac{(p+q)^{2}}{q^{5}}\frac{(p^n-q^nx)}{[n]_{p,q}}x\\ &+&\frac{p(p+q)}{q^{6}}\frac{(p^n-q^nx)(p^{n-1}-q^{n-1}x)}{[n]_{p,q}[n-1]_{p,q}}.
\end{eqnarray*}
where $\psi_{i}(x)=(t-x)^{i}$ for $i=1,2$.
\end{cor}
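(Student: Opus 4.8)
The plan is to expand each central moment in terms of the test functions $e_0,e_1,e_2$ and then substitute the estimates of Theorem~\ref{thm:1}, taking care that the direction of each inequality matches the sign of the corresponding coefficient. Throughout I use that $\widetilde{M}_{n,k}^{(p,q)}$ is linear and that $x\ge 0$ on $[0,1]$, so the inequalities of Theorem~\ref{thm:1} may be multiplied by nonnegative quantities and added termwise without being reversed.

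For the first central moment, linearity and $\widetilde{M}_{n,k}^{(p,q)}(e_0;x)=1$ give
\[
\widetilde{M}_{n,k}^{(p,q)}(\psi_1;x)=\widetilde{M}_{n,k}^{(p,q)}(e_1;x)-x .
\]
Since here no coefficient is negative, I simply insert the upper bound $\widetilde{M}_{n,k}^{(p,q)}(e_1;x)\le \frac{x}{q}+\frac{p^n-q^nx}{q^2[n]_{p,q}}$ from Theorem~\ref{thm:1} and collect the terms in $x$; this yields $\widetilde{M}_{n,k}^{(p,q)}(\psi_1;x)\le \frac{p^n-q^nx}{q^2[n]_{p,q}}+\bigl(\frac1q-1\bigr)x$, as claimed.

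For the second central moment, linearity gives
\[
\widetilde{M}_{n,k}^{(p,q)}(\psi_2;x)=\widetilde{M}_{n,k}^{(p,q)}(e_2;x)-2x\,\widetilde{M}_{n,k}^{(p,q)}(e_1;x)+x^2 .
\]
The delicate point is the sign bookkeeping: the coefficient $-2x$ of $\widetilde{M}_{n,k}^{(p,q)}(e_1;x)$ is nonpositive on $[0,1]$, so to bound the whole expression \emph{from above} I must use the \emph{lower} estimate $\widetilde{M}_{n,k}^{(p,q)}(e_1;x)\ge \frac{x}{q^2}\bigl(1-\frac{q+1}{[n]_{p,q}}\bigr)$, while for $\widetilde{M}_{n,k}^{(p,q)}(e_2;x)$ I use its upper estimate from Theorem~\ref{thm:1}. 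Adding the three contributions, the $x^2$ terms combine as $\frac{1}{q^2}-\frac{2}{q^2}+1+\frac{2(q+1)}{q^2[n]_{p,q}}=1-\frac{1}{q^2}+\frac{2(q+1)}{q^2[n]_{p,q}}$, and the remaining two terms are exactly the ones coming from the upper bound of $e_2$; this produces the stated inequality for $\widetilde{M}_{n,k}^{(p,q)}(\psi_2;x)$.

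There is no real obstacle here beyond this sign bookkeeping — choosing the lower bound for $e_1$ inside the $\psi_2$ expansion and the upper bounds elsewhere — together with the elementary remark that all test functions and $x$ are nonnegative on $[0,1]$, which legitimizes combining the estimates of Theorem~\ref{thm:1} termwise.
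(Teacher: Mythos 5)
Your proposal is correct and follows the same route as the paper, which simply invokes linearity of $\widetilde M_{n,k}^{(p,q)}$ together with Theorem \ref{thm:1}; you merely spell out the details the paper leaves implicit. In particular, your sign bookkeeping for $\psi_2$ — using the lower bound for $\widetilde M_{n,k}^{(p,q)}(e_1;x)$ against the coefficient $-2x$ and the upper bound for $\widetilde M_{n,k}^{(p,q)}(e_2;x)$ — is exactly why the paper proves both a lower and an upper estimate for the first moment in Theorem \ref{thm:1}, and your arithmetic reproduces the stated bounds.
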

\begin{proof}
 By the linearity of $\widetilde M_{n,k}^{(p,q)}$ and Theorem \ref{thm:1}, central moments can be obtained directly.
 \end{proof}
  \begin{rem}\label{rem:1}
 For $0<q<p\leq 1$, by simple computations $\lim_{n\rightarrow\infty} [n]_{p,q}=1/(p-q)$. In order to obtain results for order of convergence of the operator, we take $q_n\in(0,1)$, $p_n \in (q_n,1]$ such that $\lim_{n\rightarrow\infty} p_n=\lim_{n\rightarrow\infty} q_n=1$, $\lim_{n\rightarrow\infty} p^n_n=a$ and  $\lim_{n\rightarrow\infty} q^n_n=b$, so that $\lim_{n\rightarrow\infty} \frac{1}{[n]_{p_n,q_n}}=0$. Such a sequence can always be constructed for example, we can take $q_n=1-1/2n$ and $p_n=1-1/3n$, clearly $\lim_{n\rightarrow\infty}p_n^n=e^{-1/3}$, $\lim_{n\rightarrow\infty}q_n^n=e^{-1/2}$ and $\lim_{n\rightarrow\infty} \frac{1}{[n]_{p_n,q_n}}=0$.
  \end{rem}
 \section{Rate of convergence}
 We denote $W^2 =\{g\in C[0, 1]:g',g''\in C[0, 1]\}$. For $\delta>0,$~~~$K-$functional is defined as
\begin{equation*}
K_2(f,\delta)=\inf_{g\in W^2}\{\|f-g\|+\delta\|g''\|\},
\end{equation*}
 here norm $||.||$ denotes the supremum norm on $C[0, 1]$. Following the well-known inequality given in  DeVore and Lorentz \cite{RAG}, there exists an absolute constant $ C>0 $  such that
\begin{equation*}
K_2(f,\delta)\leq C\omega_2(f,\sqrt{\delta}),
\end{equation*}
here, $\omega_2(f,\sqrt{\delta})$ is second order modulus of continuity for $f\in C[0, 1]$, defined as
\begin{equation*}
\omega_2(f,\sqrt{\delta})=\sup_{0<h\leq\sqrt{\delta}}\sup_{x, x+h \in [0,1]}|f(x+2h)-2f(x+h)+f(x)|.
\end{equation*}
By
$\omega(f,\delta)=\sup_{0<h\leq\delta}\sup_{x, x+h \in [0,1]}|f(x+h)-f(x)|$, we denote the usual modulus of continuity for $f\in C[0, 1]$.
   \begin{thm}
 Let $(p_n)_n$ and $(q_n)_n$ be the sequence as defined in Remark \ref{rem:1}. Then for each $f \in C[0,1]$,
 $\widetilde{M}_{n,k}^{(p_n,q_n)}(f;x)$ converges uniformly to $f$.
 \end{thm}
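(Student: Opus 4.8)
The plan is to invoke the Bohman--Korovkin theorem: a sequence of positive linear operators $L_n$ on $C[0,1]$ converges uniformly to the identity as soon as $L_n(e_i;\cdot)\to e_i$ uniformly on $[0,1]$ for $i=0,1,2$. Since each $\widetilde{M}_{n,k}^{(p_n,q_n)}$ is manifestly positive and linear, it suffices to check the three test functions $e_0,e_1,e_2$, using the moment estimates of Theorem~\ref{thm:1} together with the asymptotics collected in Remark~\ref{rem:1}.

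For $e_0$ there is nothing to prove: Theorem~\ref{thm:1} gives $\widetilde{M}_{n,k}^{(p_n,q_n)}(e_0;x)=1$ identically. For $e_1$ I would sandwich the operator between the two bounds of Theorem~\ref{thm:1},
\[
\frac{x}{q_n^2}\Bigl(1-\frac{q_n+1}{[n]_{p_n,q_n}}\Bigr)\ \le\ \widetilde{M}_{n,k}^{(p_n,q_n)}(e_1;x)\ \le\ \frac{x}{q_n}+\frac{p_n^{\,n}-q_n^{\,n}x}{q_n^2[n]_{p_n,q_n}},
\]
and let $n\to\infty$. By Remark~\ref{rem:1} we have $p_n,q_n\to1$, the sequences $p_n^{\,n},q_n^{\,n}$ are bounded, and $1/[n]_{p_n,q_n}\to0$; hence both sides tend to $x$, and since every coefficient depends only on $n$, the convergence is uniform in $x\in[0,1]$.

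For $e_2$ the upper estimate in Theorem~\ref{thm:1} again tends to $x^2$ uniformly: each of its three summands has a coefficient that stays bounded (it converges to $1$, $4$ and $2$ respectively as $p_n,q_n\to1$) multiplied by $x$-bounded factors and, in the last two terms, by $1/[n]_{p_n,q_n}$ or $1/([n]_{p_n,q_n}[n-1]_{p_n,q_n})$, which go to $0$. For the matching lower bound I would use positivity: from $\widetilde{M}_{n,k}^{(p_n,q_n)}(\psi_2;x)\ge0$ and linearity one gets $\widetilde{M}_{n,k}^{(p_n,q_n)}(e_2;x)\ge 2x\,\widetilde{M}_{n,k}^{(p_n,q_n)}(e_1;x)-x^2$, whose right-hand side converges uniformly to $2x\cdot x-x^2=x^2$ by the previous step (equivalently one may quote the upper bound on $\widetilde{M}_{n,k}^{(p_n,q_n)}(\psi_2;x)$ from Corollary~\ref{cor:1}, which tends uniformly to $0$). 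Thus $\widetilde{M}_{n,k}^{(p_n,q_n)}(e_2;x)\to x^2$ uniformly, and the three Korovkin conditions hold, giving the claim.

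There is no serious obstacle here; the only points requiring a little care are book-keeping: Remark~\ref{rem:1} is phrased only for $1/[n]_{p_n,q_n}$, so one must separately note $1/[n-1]_{p_n,q_n}\to0$ (immediate since $[n-1]_{p_n,q_n}=([n]_{p_n,q_n}-p_n^{\,n-1})/q_n\to\infty$), and one must observe that the $q_n$-powers appearing in the denominators stay bounded away from $0$, which again follows at once from $p_n,q_n\to1$.
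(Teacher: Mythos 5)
Your proposal is correct and follows essentially the same route as the paper: Korovkin's theorem applied to the test functions $e_0,e_1,e_2$, using the moment bounds of Theorem~\ref{thm:1} together with the asymptotics of Remark~\ref{rem:1}. If anything, you are slightly more careful than the paper's own proof, which only invokes the upper estimates; your sandwich argument for $e_1$ and the positivity-based lower bound $\widetilde{M}_{n,k}^{(p_n,q_n)}(e_2;x)\ge 2x\,\widetilde{M}_{n,k}^{(p_n,q_n)}(e_1;x)-x^2$ supply the two-sided control that the paper leaves implicit.
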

 \begin{proof}
By Korovkin theorem, it is sufficient to show that $\lim_{n\rightarrow\infty} \|\widetilde{M}_{n,k}^{(p_n,q_n)}(t^m;x) -x^m\| = 0 $ for $ m=0,1,2$. For $m=0$ results hold trivially. Using Theorem \ref{thm:1}, we obtain the results for $m=1, 2$ as follows:
\begin{eqnarray*}
\lim_{n\rightarrow\infty} \|\widetilde{M}_{n,k}^{(p_n,q_n)} (t;x) -x\| &\leq& \lim_{n\rightarrow\infty} \left|\frac{x}{q_n} +\frac{(p_n^n-q_n^nx)}{q_n^2[n]_{p_n,q_n}} -x\right|\\
&\leq& \lim_{n\rightarrow\infty} \left|\frac{p_n^n}{q_n^2[n]_{p_n,q_n}}\right| + \lim_{n\rightarrow\infty} \left|\frac{1}{q_n}-\frac{q_n^{(n-2)}}{[n]_{p_n,q_n}}-1\right|x\\
&=& 0.
\end{eqnarray*}
Finally,
\begin{eqnarray*}
\lim_{n\rightarrow\infty} \|\widetilde{M}_{n,k}^{(p_n,q_n)} (t^2;x) -x^2\|
&\leq& \lim_{n\rightarrow\infty}\left|\frac{1}{q_n^{2}}-1\right|x^2 +\lim_{n\rightarrow\infty} \left|\frac{(p_n+q_n)^{2}}{q_n^{5}}\frac{(p_n^n-q_n^nx)}{[n]_{p_n,q_n}}\right|x\\
&+& \lim_{n\rightarrow\infty} \left|\frac{p_n(p_n+q_n)}{q_n^{6}}\frac{(p_n^n-q_n^nx)(p_n^{n-1}-q_n^{n-1}x)}{[n]_{p_n,q_n}[n-1]_{p_n,q_n}}\right|\\
&=& 0.
\end{eqnarray*}
Hence the Theorem.
\end{proof}
\begin{thm}
Let $(p_n)_n$ and $(q_n)_n$ be the sequence as defined in Remark \ref{rem:1}. Let $f\in C[0,1]$. Then for all $n\in N$, there exists an absolute constant $C>0$ such that
\begin{equation*}
|\widetilde{M}_{n,k}^{(p_n,q_n)}(f;x) - f(x)| \leq C\omega_2(f,\delta_n(x)) + \omega(f,\alpha_n(x)),
\end{equation*}
here,
\begin{equation*}
\delta_n(x) = \left\{\widetilde{M}_{n,k}^{(p_n,q_n)}((t-x)^2;x) +\widetilde{M}_{n,k}^{(p_n,q_n)}(t-x;x) \right\}^\frac{1}{2}
\end{equation*}
and
\begin{equation*}
\alpha_n(x) = \widetilde{M}_{n,k}^{(p_n,q_n)}(t-x;x).
\end{equation*}
\end{thm}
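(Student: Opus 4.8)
The plan is to follow the classical route through Peetre's $K$-functional, combined with the standard device of replacing $\widetilde{M}_{n,k}^{(p_n,q_n)}$ by an auxiliary operator that reproduces linear functions. By Theorem \ref{thm:1} our operator reproduces $e_0$ but only approximates $e_1$, so I would first set
\begin{equation*}
\widehat{M}_{n,k}^{(p_n,q_n)}(f;x) = \widetilde{M}_{n,k}^{(p_n,q_n)}(f;x) + f(x) - f\!\left(\widetilde{M}_{n,k}^{(p_n,q_n)}(t;x)\right).
\end{equation*}
Since $\widetilde{M}_{n,k}^{(p_n,q_n)}$ is positive and reproduces constants and $0\le e_1\le 1$ on $[0,1]$, the value $\widetilde{M}_{n,k}^{(p_n,q_n)}(t;x)$ lies in $[0,1]$, so the composition makes sense; moreover $\widehat{M}_{n,k}^{(p_n,q_n)}$ reproduces both $e_0$ and $e_1$, and $|\widehat{M}_{n,k}^{(p_n,q_n)}(h;x)|\le 3\|h\|$ for every $h\in C[0,1]$.

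Next, for $g\in W^2$ I would use the Taylor expansion $g(t)=g(x)+g'(x)(t-x)+\int_x^t(t-u)g''(u)\,du$ and apply $\widehat{M}_{n,k}^{(p_n,q_n)}$ to it. The linear part is reproduced, so $\widehat{M}_{n,k}^{(p_n,q_n)}(g;x)-g(x)$ collapses to the operator applied to the integral remainder minus that same remainder evaluated at $\widetilde{M}_{n,k}^{(p_n,q_n)}(t;x)$. Bounding each of these by $\frac{1}{2}\|g''\|(t-x)^2$, using positivity of the operator, and using $\left(\widetilde{M}_{n,k}^{(p_n,q_n)}(t-x;x)\right)^2\le\widetilde{M}_{n,k}^{(p_n,q_n)}(t-x;x)$ (valid because the first central moment lies in $[0,1]$), I get
\begin{equation*}
\left|\widehat{M}_{n,k}^{(p_n,q_n)}(g;x)-g(x)\right| \le \frac{1}{2}\,\|g''\|\,\delta_n(x)^2 .
\end{equation*}

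Finally, for arbitrary $f\in C[0,1]$ and $g\in W^2$ I would split
\begin{equation*}
\left|\widetilde{M}_{n,k}^{(p_n,q_n)}(f;x)-f(x)\right| \le \left|\widehat{M}_{n,k}^{(p_n,q_n)}(f-g;x)-(f-g)(x)\right| + \left|\widehat{M}_{n,k}^{(p_n,q_n)}(g;x)-g(x)\right| + \left|f\!\left(\widetilde{M}_{n,k}^{(p_n,q_n)}(t;x)\right)-f(x)\right| ,
\end{equation*}
estimate the first term by $4\|f-g\|$ (from $|\widehat{M}_{n,k}^{(p_n,q_n)}(h;x)|\le3\|h\|$), the second by $\frac{1}{2}\|g''\|\delta_n(x)^2$ from the previous step, and the third by $\omega(f,\alpha_n(x))$ directly from the definition of the modulus of continuity and $\alpha_n(x)=\widetilde{M}_{n,k}^{(p_n,q_n)}(t-x;x)$. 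Taking the infimum over $g\in W^2$ turns the first two terms into a constant multiple of $K_2\!\left(f,\delta_n(x)^2\right)$, and the inequality $K_2(f,\delta)\le C\omega_2(f,\sqrt{\delta})$ of DeVore and Lorentz then gives the claim with an absolute constant $C$. I expect the main work to be the Taylor-remainder estimate for the auxiliary operator and keeping track of which central moments appear there (the first central moment enters precisely because $e_1$ is not reproduced exactly); the well-definedness of the auxiliary operator and the bookkeeping of the absolute constants are minor points.
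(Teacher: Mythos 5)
Your proof follows essentially the same route as the paper's: an auxiliary operator that corrects the first moment, Taylor's formula with integral remainder for $g\in W^2$, the bound $|M_n^*(h;x)|\le 3\|h\|$ giving the $4\|f-g\|$ term, and then the $K$-functional with the DeVore--Lorentz inequality. The only deviation is that you recenter at the exact value $\widetilde{M}_{n,k}^{(p_n,q_n)}(t;x)$ while the paper recenters at its explicit upper bound $\frac{x}{q_n}+\frac{p_n^n-q_n^nx}{q_n^2[n]_{p_n,q_n}}$; your choice makes the annihilation of $t-x$ and the appearance of $\omega(f,\alpha_n(x))$ exact, at the cost of invoking $0\le\widetilde{M}_{n,k}^{(p_n,q_n)}(t-x;x)\le1$ to pass from the square of the first central moment to the moment itself --- a nonnegativity that is not proved in the paper but is implicitly presupposed by the theorem's own definitions of $\delta_n(x)$ and $\alpha_n(x)$.
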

\begin{proof}
For $x \in [0,1],$ we consider the operators $ M_n^*(f;x)$ as
\begin{equation*}
M_n^*(f;x) = \widetilde{M}_{n,k}^{(p_n,q_n)}(f;x) + f(x)- f\left(\frac{x}{q_n}+ \frac{p_n^n-q_n^nx}{q_n^2[n]_{p_n,q_n}}\right).
\end{equation*}
Using first central moment of $\widetilde{M}_{n,k}^{(p_n,q_n)}$ and positivity of operator, we immediately get $M_n^*(t-x;x)=0$.\\
For $g \in W^2$ and $x\in[0,1]$, using the Taylor's formula
\begin{equation*}
g(t) = g(x) + g'(x)(t-x) + \int_x^t (t-u) g''(u) du.
\end{equation*}
Therefore,
\begin{eqnarray*}
M_n^*(g;x)-g(x) &=& g'(x)M_n^*((t-x);x) + M_n^* \left(\int_x^t (t-u) g''(u) du;x \right) \\
                  &=&  \widetilde{M}_{n,k}^{(p_n,q_n)} \left(\int_x^t (t-u) g''(u) du;x \right)\\
                  &-& \int_x^ {\frac{x}{q_n}+ \frac{p_n^n-q_n^nx}{q_n^2[n]_{p_n,q_n}}} \left(\frac{x}{q_n}+ \frac{p_n^n-q_n^nx}{q_n^2[n]_{p_n,q_n}} - u\right)g''(u)du. \\
\end{eqnarray*}
Finally, we have
\begin{eqnarray*}
|M_n^*(g;x)-g(x)| &\leq& \left|\widetilde{M}_{n,k}^{(p_n,q_n)} \left(\int_x^t (t-u) g''(u) du;x \right)\right|\\
&+& \left|\int_x^ {\frac{x}{q_n}+ \frac{p_n^n-q_n^nx}{q_n^2[n]_{p_n,q_n}}} \left(\frac{x}{q_n}+ \frac{p_n^n-q_n^nx}{q_n^2[n]_{p_n,q_n}} - u\right)g''(u)du\right|\\
&\leq& \|g''\|\widetilde{M}_{n,k}^{(p_n,q_n)}((t-x)^2;x) + \left(\frac{x}{q_n}+ \frac{p_n^n-q_n^nx}{q_n^2[n]_{p_n,q_n}} - x\right)^2 \|g''\|\\
&=& \delta_n^2(x)\|g''\|.
\end{eqnarray*}
Also, we have
\begin{equation*}
|M_n^*(f;x)| \leq |\widetilde{M}_{n,k}^{(p_n,q_n)}(f;x)| +2\|f\| \leq3\|f\|.
\end{equation*}
Therefore,
\begin{eqnarray*}
|\widetilde{M}_{n,k}^{(p_n,q_n)}(f;x)-f(x)| &\leq& |M_n^*(f-g;x)-(f-g)(x)| + \left|f\left(\frac{x}{q_n}+ \frac{p_n^n-q_n^nx}{q_n^2[n]_{p_n,q_n}}\right) -f(x)\right|\\&+&|M_n^*(g;x)-g(x)|\\
&\leq& |M_n^*(f-g;x)| + |(f-g)(x)|+\left|f\left(\frac{x}{q_n}+ \frac{p_n^n-q_n^nx}{q_n^2[n]_{p_n,q_n}}\right) -f(x)\right|\\&+&|M_n^*(g;x)-g(x)|\\
&\leq& 4\|f-g\| + \omega\left(f,\alpha_n(x)\right) + \delta_n^2(x)\|g''\|.
\end{eqnarray*}
On taking the infimum on the right hand side over all $g\in W^2$  and by the definition of $K-$functional, we get
\begin{equation*}
|\widetilde{M}_{n,k}^{(p_n,q_n)}(f;x)-f(x)|\leq 4K_2(f,\delta_n^2(x)) +\omega(f,\alpha_n(x)).
\end{equation*}
\end{proof}
\section{Statistical Approximation}
In this section, by using a Bohman-Korovkin type theorem proved in
\cite{Gadjiev_Orhan}, we present the statistical approximation
properties of purposed operator.

At this moment, we recall the concept of statistical convergence.\\
A sequence $(x_{n})_{n}$ is said to be statistically convergent to
a number $L$, denoted by $st-\lim\limits_{n}x_{n}=L$ if, for every
$\varepsilon>0$,\\
    \begin{equation*}\delta\{n\in\mathbb{N}:|x_{n}-L|\geq\varepsilon\}=0,\end{equation*}
     where
    \begin{equation*}\delta(S):=\frac{1}{N}\sum_{k=1}^N \chi_S(j)\end{equation*}
is the natural density of set $S\subseteq \mathbb{N}$ and $\chi_S$
is the characteristic function of S.

Let $C_{B}(D)$ represents the space of all continuous functions on D
 and bounded on entire real line, where D is any interval on real
 line. It can be easily shown that $C_{B}(D)$ is a Banach space with
 supreme norm. Also $\widetilde{M}_{n,k}^{(p,q)}(f;x)$ are well defined for any
 $f \in C_{B}([0,1])$.

\begin{thm} \label{thm:111}(\cite{Gadjiev_Orhan})
 Let $\left(L_{n}\right)_{n}$ be a sequence
of positive linear operators from $C_{B}([a,b])$ into
$B([a,b])$ and satisfies the condition that
 \begin{equation*}
st-\lim\limits_{n}\left\|L_{n}e_{i}-e_{i}\right\|=0\textit{ for
all } i=0,1,2.
\end{equation*} Then,
\begin{equation*}
    st-\lim\limits_{n}\left\|L_{n}f-f\right\|=0 \textit{ for all } f\in C_{B}([a,b]).
\end{equation*}
\end{thm}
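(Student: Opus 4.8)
The plan is to run the classical Korovkin argument and only at the very last step replace the ordinary limit by the statistical one. Fix $f\in C_{B}([a,b])$ and set $M=\|f\|$. The first ingredient is the standard uniform majorization: given $\varepsilon>0$, uniform continuity of $f$ on $[a,b]$ together with its boundedness yields a $\delta>0$ such that
\[
|f(t)-f(x)|\le \varepsilon+\frac{2M}{\delta^{2}}(t-x)^{2},\qquad t,x\in[a,b].
\]
Writing $L_{n}(f;x)-f(x)=L_{n}\bigl(f(t)-f(x);x\bigr)+f(x)\bigl(L_{n}(e_{0};x)-1\bigr)$ and using positivity of $L_{n}$ to get $|L_{n}(f(t)-f(x);x)|\le L_{n}(|f(t)-f(x)|;x)$, the majorization above gives
\[
|L_{n}(f;x)-f(x)|\le \varepsilon+(\varepsilon+M)\,|L_{n}(e_{0};x)-1|+\frac{2M}{\delta^{2}}\,L_{n}\bigl((t-x)^{2};x\bigr).
\]

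Next I would expand the second central moment using only the algebraic identity $e_{2}(t)-2x\,e_{1}(t)+x^{2}e_{0}(t)\equiv 0$, namely
\[
L_{n}\bigl((t-x)^{2};x\bigr)=\bigl(L_{n}(e_{2};x)-e_{2}(x)\bigr)-2x\bigl(L_{n}(e_{1};x)-e_{1}(x)\bigr)+x^{2}\bigl(L_{n}(e_{0};x)-e_{0}(x)\bigr).
\]
Taking the supremum over $x\in[a,b]$ and bounding $|x|$ by a constant depending only on $a$ and $b$, one collects everything into an estimate of the form
\[
\|L_{n}f-f\|\le \varepsilon+C_{\varepsilon}\Bigl(\|L_{n}e_{0}-e_{0}\|+\|L_{n}e_{1}-e_{1}\|+\|L_{n}e_{2}-e_{2}\|\Bigr),
\]
where the constant $C_{\varepsilon}$ depends on $\varepsilon,M,\delta,a,b$ but is independent of $n$.

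To finish, fix $r>0$ and take $\varepsilon=r/2$ in the last display. Then
\[
\Bigl\{n\in\mathbb{N}:\|L_{n}f-f\|\ge r\Bigr\}\subseteq\bigcup_{i=0}^{2}\Bigl\{n\in\mathbb{N}:\|L_{n}e_{i}-e_{i}\|\ge \frac{r}{6\,C_{\varepsilon}}\Bigr\}.
\]
By hypothesis $st-\lim_{n}\|L_{n}e_{i}-e_{i}\|=0$ for $i=0,1,2$, so each of the three index sets on the right has natural density zero; since a subset of a density-zero set has density zero and the natural density is finitely subadditive, the union, and therefore the left-hand set, has density zero. As $r>0$ was arbitrary, $st-\lim_{n}\|L_{n}f-f\|=0$, which is the assertion.

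The only point that needs genuine care is this last inclusion of index sets: it is crucial that $C_{\varepsilon}$ does not depend on $n$, so that the single scalar threshold $r/(6C_{\varepsilon})$ works for every $n$ simultaneously, and that only three exceptional sets occur, so that the merely finitely (not countably) subadditive natural density may be invoked. Everything else is the routine Korovkin computation and introduces no new idea.
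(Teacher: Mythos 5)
Your argument is correct: the uniform majorization $|f(t)-f(x)|\le\varepsilon+\tfrac{2M}{\delta^{2}}(t-x)^{2}$, the expansion of the second central moment in terms of $e_{0},e_{1},e_{2}$, the $n$-independent constant $C_{\varepsilon}$, and the final inclusion of exceptional index sets (using only finite subadditivity of natural density) together give a complete proof. Note, however, that the paper itself offers no proof to compare against --- Theorem \ref{thm:111} is quoted from the reference of Gadjiev and Orhan and used as a black box --- so what you have written is essentially a self-contained reproduction of the standard statistical Korovkin argument from that cited source, and it is the right one.
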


\begin{thm}
Let $\{p_n\}_n,\{q_n\}_n$ be sequences such that
\begin{equation*}
st-\lim_{n\to\infty}q_n=1,~~st-\lim_{n\to\infty}{q_n}^n=a,
\end{equation*}
\begin{equation*}
st-\lim_{n\to\infty}p_n=1,~~st-\lim_{n\to\infty}{p_n}^n=b.
\end{equation*} Then, $\widetilde{M}_{n,k}^{(p_n,q_n)}(f,x)$ converges statistically to $f$.
Therefore,
\begin{equation*}
st-\lim\limits_{n}\left\|\widetilde{M}_{n,k}^{(p_n,q_n)}(f(t);x)-f\right\|_{C[0,1]}=0 \textit{ for all } f\in C[0,1].
\end{equation*}
\end{thm}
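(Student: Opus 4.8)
The plan is to apply the statistical Korovkin--Bohman theorem, Theorem~\ref{thm:111}, with $L_n=\widetilde{M}_{n,k}^{(p_n,q_n)}$ and $[a,b]=[0,1]$, noting that $C_B([0,1])=C[0,1]$ since $[0,1]$ is compact and that each $\widetilde{M}_{n,k}^{(p_n,q_n)}$ is a positive linear operator. It therefore suffices to verify the three statistical Korovkin conditions
\[
st-\lim_{n}\bigl\|\widetilde{M}_{n,k}^{(p_n,q_n)}(e_i;\cdot)-e_i\bigr\|_{C[0,1]}=0,\qquad i=0,1,2,
\]
and the case $i=0$ is immediate because $\widetilde{M}_{n,k}^{(p_n,q_n)}(e_0;x)\equiv 1$ by Theorem~\ref{thm:1}.

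For $i=1,2$ I would substitute the moment estimates of Theorem~\ref{thm:1} into the supremum over $x\in[0,1]$. Since $0\le x\le 1$ and $0\le p_n^n-q_n^n x\le p_n^n$, the $e_1$--estimate yields a bound of the form
\[
\bigl\|\widetilde{M}_{n,k}^{(p_n,q_n)}(e_1;\cdot)-e_1\bigr\|\le\max\Bigl\{\ \bigl|\tfrac{1}{q_n}-1\bigr|+\tfrac{p_n^n}{q_n^2[n]_{p_n,q_n}},\ \ \bigl|\tfrac{1}{q_n^2}-1\bigr|+\tfrac{q_n+1}{q_n^2[n]_{p_n,q_n}}\ \Bigr\},
\]
and the $e_2$--estimate (equivalently, Theorem~\ref{thm:1} applied to the central moments in Corollary~\ref{cor:1}) gives an analogous finite sum. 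In each case the right-hand side is a finite combination --- sums, products, quotients --- of the bounded auxiliary sequences $q_n$, $q_n^{-1}$, $p_n^n$, $q_n^n$, $1/[n]_{p_n,q_n}$ and $1/[n-1]_{p_n,q_n}$.

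The remaining work is to compute the statistical limits of these auxiliary sequences. Using that $st-\lim$ is linear and, on bounded sequences, multiplicative, and that $st-\lim a_n=L\neq 0$ implies $st-\lim a_n^{-1}=L^{-1}$, the hypotheses $st-\lim p_n=st-\lim q_n=1$ give $st-\lim q_n^{-1}=st-\lim q_n^{-2}=1$, so every summand carrying a factor $q_n^{-1}-1$ or $q_n^{-2}-1$ has statistical limit $0$. It then remains to show $st-\lim 1/[n]_{p_n,q_n}=0$ (and likewise for $1/[n-1]_{p_n,q_n}$, treated the same way since $p_n^{n-1}=p_n^n/p_n$ and $q_n^{n-1}=q_n^n/q_n$ share the statistical limits $b$ and $a$): this is the statistical analogue of the computation recalled in Remark~\ref{rem:1}, and one argues that, off a set of density zero, $[n]_{p_n,q_n}=(p_n^n-q_n^n)/(p_n-q_n)$ diverges to $\infty$ --- for instance through the elementary bound $[n]_{p_n,q_n}\ge n\,q_n^{n-1}$. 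Combining these facts, each term in the bounds for $i=1,2$ has statistical limit $0$, hence so do the norms by subadditivity of $st-\lim$, and Theorem~\ref{thm:111} completes the proof. The one genuinely delicate point is the justification of $st-\lim 1/[n]_{p_n,q_n}=0$ from the hypotheses exactly as stated --- when $a=b$ the crude estimate of a non-vanishing numerator over a vanishing denominator is unavailable --- and here I would lean on the same explicit construction used in Remark~\ref{rem:1} to exhibit admissible pairs $(p_n)_n,(q_n)_n$; everything else is a straightforward transcription of the uniform-convergence argument with ordinary limits replaced by statistical limits.
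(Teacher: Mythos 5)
Your proposal follows essentially the same route as the paper: apply the Gadjiev--Orhan statistical Korovkin theorem (Theorem \ref{thm:111}) with $L_n=\widetilde{M}_{n,k}^{(p_n,q_n)}$ and verify the three conditions by taking suprema over $x\in[0,1]$ in the moment bounds of Theorem \ref{thm:1}. The only difference is that you explicitly isolate (and worry about) the claim $st-\lim_{n} 1/[n]_{p_n,q_n}=0$, which the paper passes over silently by invoking only $st-\lim_n p_n=st-\lim_n q_n=1$, so your version is, if anything, more careful than the published proof.
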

\begin{proof}
By Theorem \ref{thm:111}, it is sufficient to prove that
\begin{equation*}
st-\lim\limits_{n}\left\|\widetilde{M}_{n,k}^{(p_n,q_n)}(f_i(t);x)-f_i(x)\right\|_{C[0,1]}=0 \textit{ for all } i=0,1,2.
\end{equation*}
Based on Theorem \ref{thm:1}, we have
\begin{equation*}
st-\lim\limits_{n}\left\|\widetilde{M}_{n,k}^{(p_n,q_n)}(1;x)-1\right\|_{C[0,1]}=0,
\end{equation*}
\begin{equation*}
|\widetilde{M}_{n,k}^{(p_n,q_n)}(t;x)-x| \leq \left|\frac{p_n^n}{q_n^2[n]_{p_n,q_n}} + \frac{x}{q_n}-\frac{xq_n^{(n-2)}}{[n]_{p_n,q_n}}-x\right|
\end{equation*}
and
\begin{equation*}
|\widetilde{M}_{n,k}^{(p_n,q_n)}(t^2;x)-x^2| \leq \left|\frac{x^2}{q_n^{2}}-x^2 + \frac{x(p_n+q_n)^{2}}{q_n^{5}}\frac{(p_n^n-q_n^nx)}{[n]_{p_n,q_n}}
+ \frac{p_n(p_n+q_n)}{q_n^{6}}\frac{(p_n^n-q_n^nx)(p_n^{n-1}-q_n^{n-1}x)}{[n]_{p_n,q_n}[n-1]_{p_n,q_n}}\right|.
\end{equation*}
 By taking supremum over $x\in[0,1]$ in above inequalities, we get
\begin{equation*}
 |\widetilde{M}_{n,k}^{(p_n,q_n)}(t;x)-x| \leq \left|\frac{p_n^n}{q_n^2[n]_{p_n,q_n}} + \frac{1}{q_n}-\frac{q_n^{(n-2)}}{[n]_{p_n,q_n}}-1\right|
 \end{equation*}
 and
 \begin{equation*}
 |\widetilde{M}_{n,k}^{(p_n,q_n)}(t^2;x)-x^2| \leq \left|\frac{1}{q_n^{2}}-1 + \frac{(p_n+q_n)^{2}}{q_n^{5}}\frac{(p_n^n-q_n^nx)}{[n]_{p_n,q_n}}
+ \frac{p_n(p_n+q_n)}{q_n^{6}}\frac{(p_n^n-q_n^n)(p_n^{n-1}-q_n^{n-1})}{[n]_{p_n,q_n}[n-1]_{p_n,q_n}}\right|.
\end{equation*}
By using fact that $st-\lim_{n}q_{n}=1$ and $st-\lim_{n}p_{n}=1$, we get
\begin{eqnarray*}
st-\lim\limits_{n}\left\|\widetilde{M}_{n,k}^{(p_n,q_n)}(t;x)-x\right\|_{C[0,1]}&=&0,\\ st-\lim\limits_{n}\left\|\widetilde{M}_{n,k}^{(p_n,q_n)}(t^2;x)-x^2\right\|_{C[0,1]}&=&0.
\end{eqnarray*}
Hence the theorem.
\end{proof}
In the next theorem, we  estimate the rate of convergence by using the concepts of modulus of continuity.
\begin{thm}
Let $\{p_n\}_n,\{q_n\}_n$ be sequences such that
\begin{equation*}
st-\lim_{n\to\infty}q_n=1,~~st-\lim_{n\to\infty}{q_n}^n=a,
\end{equation*}
\begin{equation*}
st-\lim_{n\to\infty}p_n=1,~~st-\lim_{n\to\infty}{p_n}^n=b.
\end{equation*}Then,
\begin{equation}
|\widetilde{M}_{n,k}^{(p_n,q_n)}(f;x)-f|\leq 2\omega(f,\surd\delta_{n})
\end{equation}
for all $f\in C[0,1]$, here $\delta_{n}=\widetilde{M}_{n,k}^{(p_n,q_n)}((t-x)^2;x)$.
\end{thm}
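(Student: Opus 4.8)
The plan is to obtain this pointwise estimate from the standard smoothing inequality for the modulus of continuity together with the positivity and linearity of $\widetilde{M}_{n,k}^{(p_n,q_n)}$, exactly as one proves the classical Korovkin rate bounds. Fix $n\in\mathbb{N}$ and $x\in[0,1]$ (the case $x=1$ being trivial), and regard $f\mapsto\widetilde{M}_{n,k}^{(p_n,q_n)}(f;x)$ as a positive linear functional. First I would use that for every $\delta>0$ and all $t\in[0,1]$,
\begin{equation*}
|f(t)-f(x)|\leq\left(1+\frac{|t-x|}{\delta}\right)\omega(f,\delta).
\end{equation*}
Applying $\widetilde{M}_{n,k}^{(p_n,q_n)}(\cdot;x)$ to $t\mapsto|f(t)-f(x)|$, invoking monotonicity, linearity, and the identity $\widetilde{M}_{n,k}^{(p_n,q_n)}(e_0;x)=1$ from Theorem \ref{thm:1}, one arrives at
\begin{equation*}
\left|\widetilde{M}_{n,k}^{(p_n,q_n)}(f;x)-f(x)\right|\leq\left(1+\frac{1}{\delta}\,\widetilde{M}_{n,k}^{(p_n,q_n)}(|t-x|;x)\right)\omega(f,\delta).
\end{equation*}

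The second step is to bound the first absolute moment by the square root of the second. Since $\widetilde{M}_{n,k}^{(p_n,q_n)}(\cdot;x)$ is a positive linear functional of total mass $\widetilde{M}_{n,k}^{(p_n,q_n)}(e_0;x)=1$, the Cauchy--Schwarz inequality for such functionals gives
\begin{equation*}
\widetilde{M}_{n,k}^{(p_n,q_n)}(|t-x|;x)\leq\left(\widetilde{M}_{n,k}^{(p_n,q_n)}((t-x)^2;x)\right)^{1/2}=\sqrt{\delta_n}.
\end{equation*}
Substituting this estimate and then choosing $\delta=\sqrt{\delta_n}$ makes the bracketed factor equal to $2$, which yields $\left|\widetilde{M}_{n,k}^{(p_n,q_n)}(f;x)-f(x)\right|\leq2\,\omega(f,\sqrt{\delta_n})$, the claimed inequality.

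I do not foresee a genuine obstacle in this argument; the only point deserving a word of justification is the Cauchy--Schwarz step, which is legitimate because for each fixed $n$ and $x$ the map $f\mapsto\widetilde{M}_{n,k}^{(p_n,q_n)}(f;x)$ is an infinite sum of $(p,q)$-integrals against nonnegative kernels, hence a bona fide positive linear functional with $\widetilde{M}_{n,k}^{(p_n,q_n)}(1;x)=1$. Finally, it is worth recording why the estimate belongs in this section: combining it with the upper bound for $\widetilde{M}_{n,k}^{(p_n,q_n)}(\psi_2;x)$ from Corollary \ref{cor:1} and the hypotheses $st-\lim_n q_n=st-\lim_n p_n=1$, $st-\lim_n q_n^n=a$, $st-\lim_n p_n^n=b$, one gets $st-\lim_n\|\delta_n\|_{C[0,1]}=0$, and therefore $st-\lim_n\|\widetilde{M}_{n,k}^{(p_n,q_n)}(f)-f\|_{C[0,1]}=0$ for every $f\in C[0,1]$.
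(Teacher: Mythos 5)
Your argument is correct and is essentially the paper's own: the standard modulus-of-continuity smoothing inequality, positivity and $\widetilde{M}_{n,k}^{(p_n,q_n)}(e_0;x)=1$, followed by the choice $\delta=\sqrt{\delta_n}$ to make the bracket equal to $2$. The only cosmetic difference is that the paper uses the quadratic form $|f(t)-f(x)|\leq\bigl(1+(t-x)^2/\delta^2\bigr)\omega(f,\delta)$ and so bounds $\widetilde{M}_{n,k}^{(p_n,q_n)}((t-x)^2;x)$ directly, whereas you use the first-order form and then pass to the second moment via Cauchy--Schwarz, which you justify correctly.
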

\begin{proof}
By the linearity and monotonicity of the operator, we get
\begin{equation*}
|\widetilde{M}_{n,k}^{(p_n,q_n)}(f;x)-f|\leq \widetilde{M}_{n,k}^{(p_n,q_n)}(|f(t)-f(x)|;x),
\end{equation*}
also, by property of modulus of continuity
\begin{equation*}
|f(t)-f(x)|\leq\left(1+\frac{(t-x)^2
}{\delta^{2}}\right)\omega(f,\delta).
\end{equation*}
By using above facts, we get
\begin{equation*}
|\widetilde{M}_{n,k}^{(p_n,q_n)}(f;x)-f| \leq
\left(\widetilde{M}_{n,k}^{(p_n,q_n)}\left(1;x\right)+\frac{1}{\delta^{2}}\widetilde{M}_{n,k}^{(p_n,q_n)}\left((t-x)^{2}
;x\right)\right)\omega(f,\delta).
\end{equation*}
So, letting $\delta_{n}=\widetilde{M}_{n,k}^{(p_n,q_n)}\left((t-x)^{2}
;x\right)$ and take $\delta=\surd\delta_{n}$, we finally get result.
\end{proof}
\section{Graphical Illustrations}
In this section, we show approximation by $(p, q)-$Meyer-K\"{o}nig-Zeller Kantrovich operators using Matlab programming for functions $f(x)=$ $(x-2/3)(x-4/5)$, $(x-1/4)(x-2/3)(x-4/5)$, $(x-1/3)(x-2/3)(x-3/5)(x-4/5)$ and $(x-1/3)(x-2/3)(x-3/5)(x-4/5)(x-5/7)$ taking $n=25$ and $k=150$.
\newpage
\begin{figure}\label{fig:hainx}
\centering\vspace{0 cm}
    \begin{subfigure}[b]{.55\textwidth}
        \includegraphics[width=\textwidth]{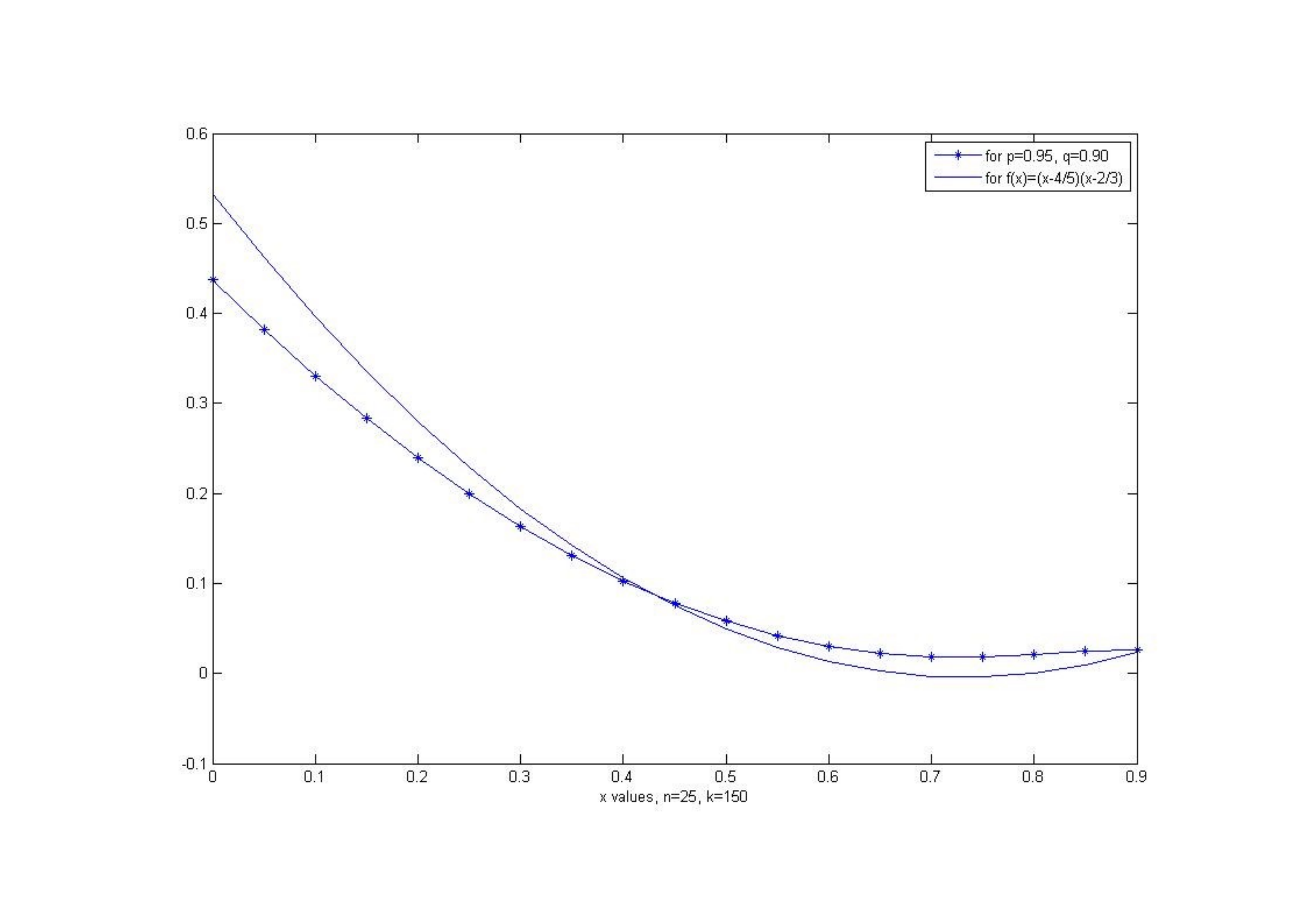}
        \vspace{-2\baselineskip}
        \caption{$f(x)=(x-2/3)(x-4/5)$}\label{fig:lab}
    \end{subfigure}
    ~\quad
    \begin{subfigure}[b]{.55\textwidth}
        \includegraphics[width=\textwidth]{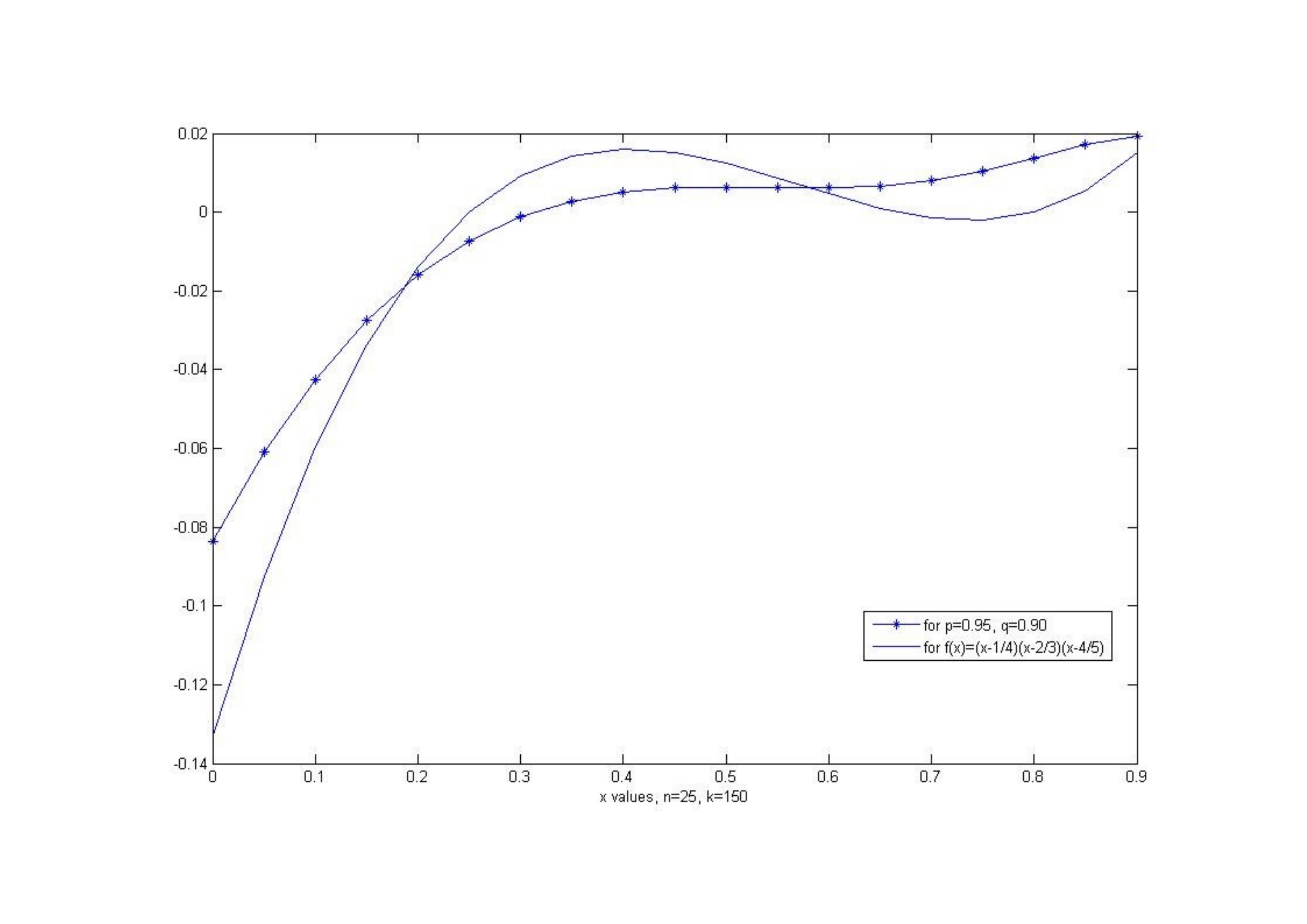}
        \vspace{-2\baselineskip}
        \caption{$f(x)=(x-1/4)(x-2/3)(x-4/5)$}\label{fig:lab1}
    \end{subfigure}\\[0ex]
    \begin{subfigure}[b]{.55\textwidth}
        \includegraphics[width=\textwidth]{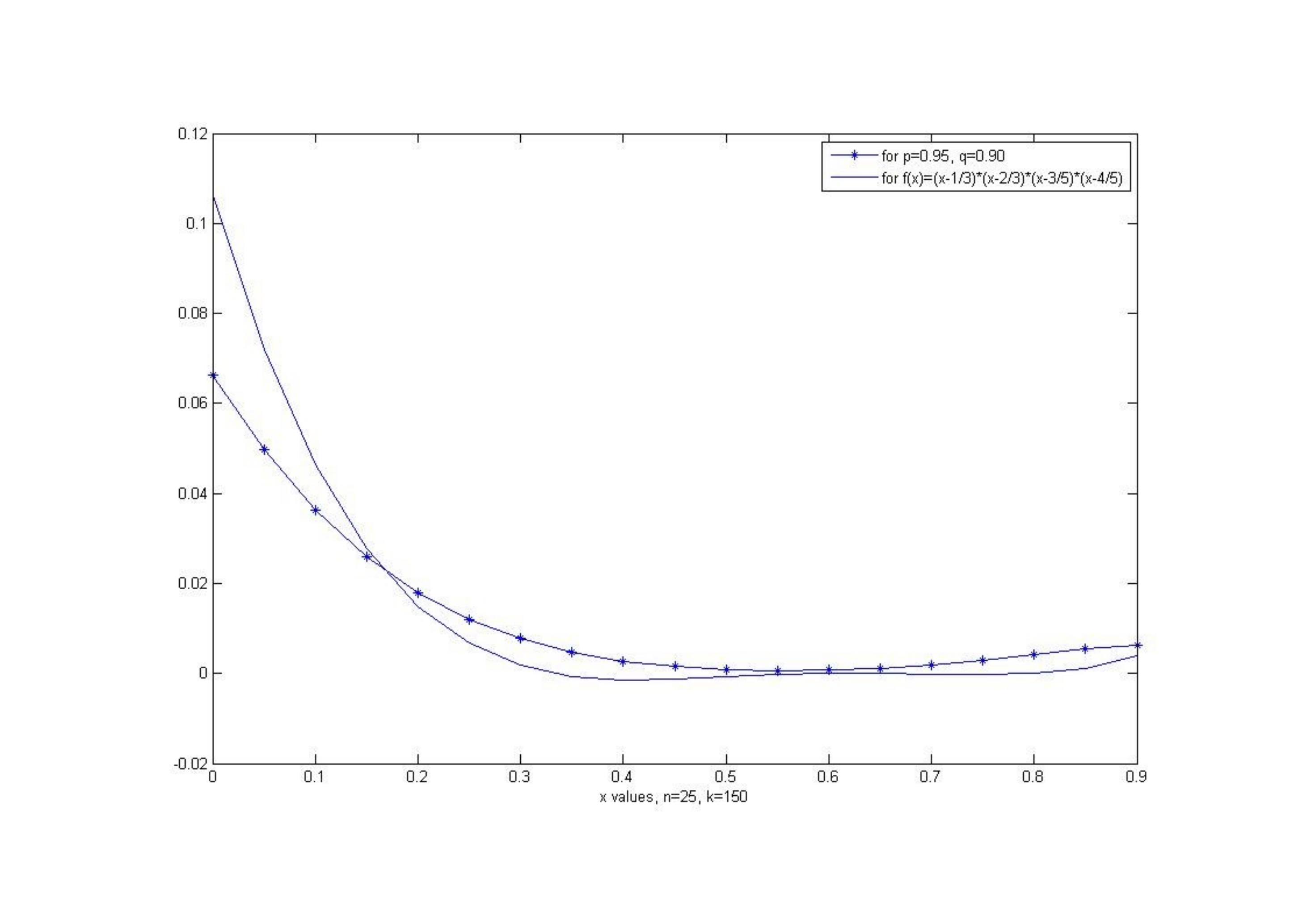}
        \vspace{-2\baselineskip}
        \caption{$f(x)=(x-1/3)(x-2/3)(x-3/5)(x-4/5)$}\label{fig:lab2}
   \end{subfigure}
   ~\quad
   \begin{subfigure}[b]{.55\textwidth}
        \includegraphics[width=\textwidth]{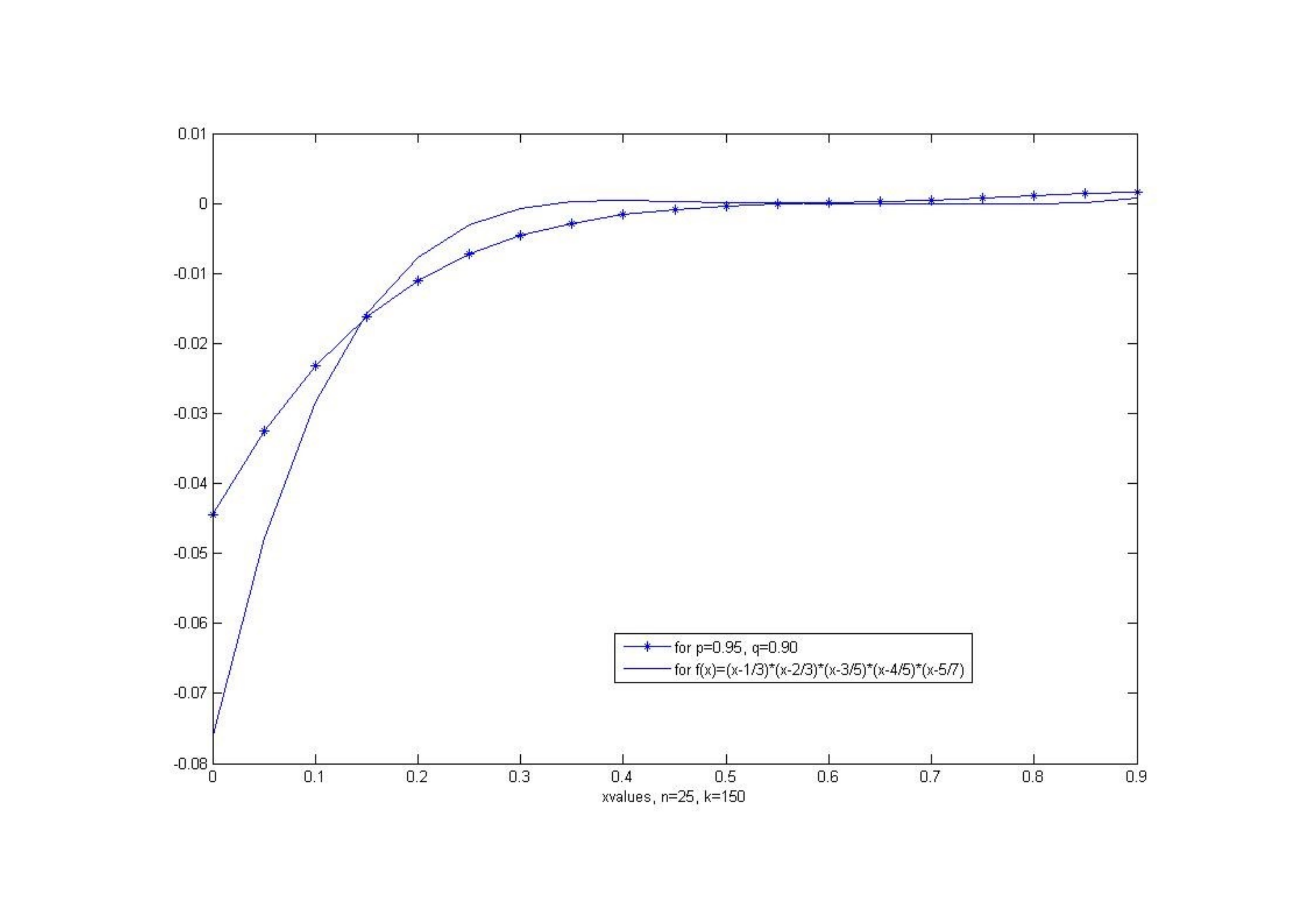}
        \vspace{-2\baselineskip}
        \caption{$f(x)=(x-1/3)(x-2/3)(x-3/5)(x-4/5)(x-5/7)$}\label{fig:lab3}
   \end{subfigure}
\end{figure}

\end{document}